\title{Singularity formation for the 1D compressible Euler equation with  variable damping coefficient}
 \author{Yuusuke Sugiyama\footnote{e-mail:sugiyama@ma.kagu.tus.ac.jp\ \ \ telephone number:(+81)3-3260-4271}\\
Department of Mathematics,\ Tokyo University of Science\\
Kagurazaka 1-3, Shinjuku-ku, Tokyo 162-8601, Japan}
\date{}
\theoremstyle{definition} 
\newtheorem{Def}{Deffinition}[section]
\newtheorem{Lemma}[Def]{Lemma}
\newtheorem{Thm}[Def]{Theorem}
\newtheorem{Remark}[Def]{Remark}
\newcommand{\R}{\mathbb{R}} 
\begin{document}
\maketitle 
\begin{abstract}
In this paper, we consider some blow-up problems for the 1D Euler equation with time and space dependent  damping.
We investigate sufficient conditions on  initial data and the rate of spatial or
time-like decay of the coefficient of damping for the occurrence of the finite time blow-up. In particular, our sufficient conditions ensure that  the derivative blow-up occurs in finite time
with the solution itself and the pressure bounded.  Our method is based on  simple estimates with Riemann invariants. Furthermore, we  give sharp lower and upper estimates of the lifespan
of solutions, when initial data are small perturbations of constant states.

\end{abstract}

\section{Introduction}
In this paper, we consider the following Cauchy problem of the compressible Euler equation with time and space dependent  damping
\begin{eqnarray} 
\left\{  \begin{array}{ll} \label{de0}
  u_t - v_x  =0,\\
 v_t + p(u)_x = -a(t,x)v , \\   
   (u(0,x), v(0,x))=(u_0 (x), v_0(x)).
\end{array} \right.  
\end{eqnarray} 
Here $x \in \R$ is the Lagrangian spatial variable and $t \in \R_+$ is time. $u=u(t,x)$ and $v=v(t,x)$  are the real valued unknown functions, which stand for the specific volume and the fluid  velocity.
For the damping coefficient $a(t,x)$, we suppose that $a(t,x)\geq 0$ for all $(t,x)$. 
In the case with $a\equiv 0$, the equations in \eqref{de0} are the compressible Euler, which is a fundamental model for the compressible inviscid fluids.
In the case with $a\not\equiv 0$, this system describes the flow of fluids in  porous media.
We assume that the flow is barotropic ideal gases. Namely the pressure $p$ satisfies that 
\begin{eqnarray}\label{poly}
p(u)=\frac{u^{-\gamma}}{\gamma} \ \  \mbox{for} \  \gamma > 1. 
\end{eqnarray}
For initial data, in order to avoid the singularity of $p'$, we assume that there exists  constant $\delta_0 >0$  such that for all $x \in \R$
\begin{eqnarray} \label{nosin}
u_0 (x) \geq \delta_0.
\end{eqnarray}
The local existence and the uniqueness of solutions of \eqref{de0} hold with $C^1 _b$ initial data (e.g. Friedrichs \cite{KF}, Lax \cite{lax0} and Li and Wu \cite{LW}).
If $u_0, v_0 \in C^1 _b (\R)$ and $a \in C^1 _b (\R^2)$ and \eqref{nosin} is assumed, then \eqref{de0} has a local and unique solution until 
the one of the following three blow-up phenomena.
The first is the $L^\infty$ blow-up:
\begin{eqnarray*} 
\varlimsup_{t\nearrow T^{*}}  \| (u,v)(t)\|_{L^{\infty}}=\infty.
\end{eqnarray*} 
The second is the derivative blow-up:
\begin{eqnarray*} 
\varlimsup_{t\nearrow T^{*}} \left( \| (u_t,v_t)(t)\|_{L^{\infty}}+\| (u_x,v_x)(t)\|_{L^{\infty}} \right)=\infty.
\end{eqnarray*} 
The second is the blow-up of $p'$:
\begin{eqnarray*} 
\varlimsup_{t\nearrow T^{*}}  \| p'(u(t))\|_{L^{\infty}}=\infty.
\end{eqnarray*} 
We consider sufficient conditions for the occurrence of the derivative blow-up on $a(t,x)$ and initial data.
The purposes of this paper are as follows.

{\bf (I)}\  For the case of space independent damping: $a(t,x)=\lambda/(1+t)^\mu$, we show that the derivative blow-up  occurs in finite time with $u, v$ and $p'$ bounded
under some suitable condition on initial data, when  $\mu >1$ and $\lambda \geq 0$ or $\mu =1$ and $0 \leq  \lambda  \leq 2$ (Theorems \ref{main2} and \ref{main1}).

{\bf (I\hspace{-.1em}I)}\ For the same coefficient of damping as in (I), we give sharp upper and lower estimates of the lifespan of solutions, when initial data are small perturbations near constant states (Theorems \ref{main3} and \ref{main4}).

{\bf (I\hspace{-.1em}I\hspace{-.1em}I)}\  For the case of time and space dependent damping, we show that the derivative blow-up  occurs in finite time with $u, v$ and $p'$ bounded
under some suitable condition on  initial data, when the time-like or spatial decay-rate of $a(t,x)$, $a_t (t,x)$ and $a_x (t,x)$  are faster than $-1$ (Theorem \ref{main5}).

Before recalling proceeding results, we give notations. We set $c=\sqrt{-p'(u)}$ and $\eta= \int_{u} ^{\infty} c(\xi) d\xi=\frac{2}{\gamma-1} u^{-(\gamma-1)/2}$
and define Riemann invariants as follows:
\begin{eqnarray} \label{ri}
\begin{array}{ll}
r = v-\eta, \\
s = v+\eta.
\end{array}
\end{eqnarray}

We define the lifespan of solutions by
\begin{align*}
T^* =& \sup\{T>0 \ | \   \sup_{t \in [0,T )} \| (u,v)(t)\|_{L^{\infty}} +  \| (u_t,v_t)(t)\|_{L^{\infty}} \\
&+\| (u_x,v_x)(t)\|_{L^{\infty}} +  \| p'(u(t))\|_{L^{\infty}} < \infty \}.
\end{align*}

First, we recall known results for \eqref{de0} without damping ($a(t,x)\equiv 0$).
For more general $2\times 2$ strictly hyperbolic system including the 1D Euler equation, sufficient conditions for the derivative blow-up (the formation singularity) has been studied by many mathematicians
(e.g. Lax \cite{lax}, Zabusky \cite{z}, Klainerman and Majda \cite{km}, MacCamy and Mizel \cite{mm2}, Manfrin \cite{mm}, Liu \cite{liu} and 
 Cheng,  Pan and  Zhu \cite{GPZ}).  In  \cite{lax}, Lax has established important formulas for solutions to $2 \times 2$ hyperbolic systems. Applying the Lax formulas to the Euler equation,  we can show that  if $r_x$ or $s_x$ is negative at some point, then the derivative blow-up occurs. Furthermore, since the Riemann invariants do  not change on the plus and minus, we can get the boundedness of $u, v$ and $p'$ under suitable assumptions on $r(0,x)$ and $s(0,x)$ (see also Manfrin \cite{mm}).
Lax's method also can show that solutions exist globally in time, if $r_x (0,x)$ and $s_x (0,x)$ are non-negative for all $x$.
 Recently, in \cite{GPZ}, Cheng,  Pan and  Zhu have shown that smooth solutions to the 1D Euler equation with $1<\gamma <3$ exist globally in time, if and only if  $r_x$ and $s_x$ are non-negative.  Moreover they have shown that the derivative blow-up occurs with $u$, $v$ and $p'$ bounded, if $r_x$ or $s_x$ is negative at some point.
 Their proof is based on a new time-dependent estimate of $u$.

Next, let us recall  known results for the 1D Euler equation with constant damping ($a(t,x)\equiv \mbox{potitive constant}$).
Hsiao and  Liu \cite{HL} has proved that  solutions exist globally in time,  if initial data are small perturbations near constant states and that  small solutions asymptotically behave like
that to the following porous media system as $t\rightarrow \infty$:
\begin{eqnarray*} 
\left\{  \begin{array}{ll} 
 \bar{u}_t = -p(\bar{u})_{xx},\\
 \bar{v}=-p(\bar{u})_x .
\end{array} \right.  
\end{eqnarray*} 
After this work, many improvements and generalizations of this work have been investigated (see Hsiao and  Liu \cite{HL2}, Nishihara \cite{NK}, Hsiao and Serre \cite{HS} and
Marcati  and Nishihara \cite{MN}).
We note that, in the above papers  for the 1D Euler equation with constant damping, they assume that the existence of the limit $\lim_{x \rightarrow \pm \infty} (u_0 (x), v_0 (x))$ and that the convergence rate of the limit
is sufficiently fast in order to show the global existence of solutions via $L^2$ energy estimates.
From Wang and Chen \cite{WC}, it is known that the derivative blow-up can occur generally for the compressible Euler-Poisson equation with damping (including the 1D Euler equation with constant damping).

Next we recall the 1D Euler equation with space independent damping:
\begin{eqnarray} 
\left\{  \begin{array}{ll} \label{det}
  u_t - v_x  =0,\\
 v_t + p(u)_x = -\dfrac{\lambda}{(1+t)^\mu} v , \\    
\end{array} \right.  
\end{eqnarray} 
where $\mu, \lambda \geq 0$.
In  \cite{XP1, XP2, XP3}, Pan has found thresholds of $\mu$ and $\lambda$ separating the existence and the nonexistence of global solution in small data regime.
Namely, in the case with $0\leq \mu <1$ and $\lambda >0$ or $\mu=1$ and $\lambda >2$,  Pan \cite{XP2} has proved  that solutions  of \eqref{det} exist globally in time,  if initial data are small and compact perturbations of  constant states. While, in the case with $\mu >1$ and $\lambda >0$ or $\mu=1$ and $0 \leq \lambda \leq 2$, Pan  has proved that solutions of \eqref{det} can blow up under some conditions on initial data in \cite{XP1, XP3}. However, in \cite{XP1, XP3}, it is not determined which types of blow-up occur and which types of blow-up do not occur. In these papers, the estimate of the lifespan is $T^* \leq \exp(C/\varepsilon^2 )$.
The proofs in \cite{XP1, XP3} are based on Sideris \cite{TS}. For the existence and the nonexistence of global solutions to the 3D Euler equation with space independent damping (the 3D version of \eqref{req} with $a(t,x)=\lambda/(1+t)^{\mu}$), 
we refer to Hou, Witt and Yin \cite{HWY} and Hou and Yin \cite{HY}.

As far as the author knows, there are few result on the Euler equation with time and space dependent damping.
In Cheng \cite{G}, Cheng,  Pan and  Zhu \cite{GPZ}, Zheng \cite{hz} and Pan and Zhou \cite{PZ}, they consider the following non-isentropic compressible Euler equations:
 \begin{eqnarray} 
\left\{  \begin{array}{ll} \label{degp}
  u_t - v_x  =0,\\
 v_t + p(S,u)_x = 0 , \\   
S_t =0,
\end{array} \right.  
\end{eqnarray} 
 where $p(S,u)=e^{S(x)} p^{-\gamma} /\gamma$ with $1<\gamma <3$ and $S(t,x)=S(x)$ is a given function. They assume that $\int_{\R} |S'(x)| dx < \infty$.
Their proofs essentially use the fact that  $S$ is independent of $t$ and  the restriction  $1<\gamma <3$  to show the boundedness of the Riemann invariant.

We explain roughly main theorems for \eqref{de} (Theorems \ref{main2}, \ref{main1}, \ref{main3} \ref{main4} and \ref{main5}) and what improvements from the above known results are.
Theorems \ref{main2}, \ref{main1}, \ref{main3} and \ref{main4} treat solutions of \eqref{det}. Theorems \ref{main2} and \ref{main1} give sufficient conditions for  the boundedness $u$, $v$ and $p'$ and the occurrence of the derivative blow-up.
Theorems \ref{main2} and \ref{main1} are analogies of  the results of Lax  \cite{lax} and Cheng  Pan and  Zhu \cite{GPZ} respectively.
Theorems \ref{main3} gives upper estimates of the lifespan and improves the estimates of lifespan in Pan \cite{XP1, XP3}.
Theorems \ref{main4}, which includes the global existence of solutions,   gives lower estimates of the lifespan for all $\mu \geq 0$ and $\lambda \geq 0$. 
In our main theorems, we do not assume that the rate of convergence of the limit $\lim_{x \rightarrow - \infty} (u_0 (x), v_0 (x))$ is enough fast, which is essentially assumed in Nishihara \cite{NK},  Hsiao and  Liu \cite{HL}, \cite{MN} and Pan \cite{XP2}.
 Theorem \ref{main5} gives a sufficient condition for occurrence the derivative blow-up for solutions
 to \eqref{de0}, when the time-like or spatial decays of $a(t,x)$ itself and its $t$ and $x$ derivatives are faster than $-1$.

 In the proofs of Theorems \ref{main2} and  \ref{main1}, in order to get the boundedness of $v$ and $p'$, 
 we estimate the Riemann invariant. The idea for the estimate is the use of $\Phi (t,x,y) =r (t,x) +s(t,y)$
 and $\Psi (t,x,y) =r (t,x) - s(t,y)$. For these functions, we get a priori estimates:
 $\| \Phi (t) \|_{L^{\infty}} \leq \| \Phi (0) \|_{L^{\infty}}$ and  $\| \Psi (t) \|_{L^{\infty}} \leq \| \Psi (0) \|_{L^{\infty}}$.
In the proof of Theorem \ref{main1} , we show that   $\lim_{x \rightarrow - \infty} u(t,x) = \lim_{x \rightarrow - \infty} u_0 (x)$ by using the Riemann invariant, from which,  we have the boundedness of $u$by above estimate on $\Phi$.  Using Lax type formulas, we derive  differential inequalities for $r_x$ and $s_x$ on characteristic curves,  which yield
the blow-up of $r_x$ or $s_x$. In the proof of Theorem \ref{main1},  an upper estimate of $u$ is shown by the method of Cheng  Pan and  Zhu \cite{GPZ}.
In the same way as the proof of Theorem \ref{main2}, Theorems \ref{main3} and \ref{main4} can be shown.
In the proof of Theorems \ref{main5}, if the coefficient of damping $a(t,x)$ decays time-like, 
using $\Phi (t,x,y)$, we can apply the similar idea to in the proof of Theorem \ref{main1}. While, in the case that 
 $a(t,x)$ decays spatially, by applying a bootstrap argument, we get the boundedness of $\Phi (t,x,y)$ and lower and upper bounds of $u$.

\subsection*{Plan of this paper and notations}

This paper is organized as follows: In section 2, we treat the Cauchy problem for \eqref{det}. We introduce four  theorems (Theorems \ref{main2},  \ref{main1}, \ref{main3} and \ref{main4}) for the achievement of our goals {\bf (I)} and {\bf (I\hspace{-.1em}I)}  and some useful identities and Lemmas for their proofs.  
In section 3, we consider  the Cauchy problem \eqref{de0} and prove  a blow-up theorem  with a suitable decay condition on $a(t,x)$.

For $\Omega \subset \R^n$, $C^1 _b (\Omega)$ are the set of  bounded and continuous functions whose first partial derivatives are also bounded on $\Omega$.
The norm of $C^1 _b (\Omega)$  is $\|f \|_{C^1 _b (\Omega)} = \| f \|_{L^\infty  (\Omega)} + \| (\partial_{x_1} f,\ldots,\partial_{x_n}f) \|_{L^\infty (\Omega)}$.
When $\Omega = \R$, for abbreviation, we denote $\|\cdot  \|_{L^\infty (\Omega)} $ and $\|\cdot  \|_{C^1 _b (\Omega)} $  by $ \|\cdot  \|_{L^\infty}$ and $\|\cdot  \|_{C^1 _b }$
respectively.

 \begin{Remark}{\bf The linear wave equation with space independent damping}.
In \cite{JW1}, Wirth studies the behavior of solutions to 
$$u_{tt} - \Delta u=- \dfrac{\lambda u_t}{(1+t)^{\mu}}.$$ 
In \cite{JW2, JW3}, he give  a threshold of $\mu$  separating  solutions to this equation asymptotically behave like
that to the corresponding hear or wave equation.
In \cite{JW1}, the critical case of the threshold  is studied.
\end{Remark}

 \begin{Remark}{\bf (The Euler equation with  Eulerian coordinate)}.
Changing the coordinate from Lagrangian to Eulerian and putting $\rho=1/u$, we see that  the equations in \eqref{de0} are equivalent to
 \begin{eqnarray} 
\left\{  \begin{array}{ll} \label{req}
  \partial_t \rho - \partial_x ( \rho v) =0,\\
  \partial_t (\rho v) +  \partial_x (\rho v ^2 + p(\rho)) = -a(t,x) \rho v. \\   
 
\end{array} \right.  
\end{eqnarray} 
In Pan \cite{XP1}, \cite{XP2} and \cite{XP3}, \eqref{req} is treated.
\end{Remark}

 \begin{Remark}{\bf (Remarks on the $L^\infty$ blow-up and the blow-up of $p'$ for the p-system)}. \label{bpp}
 For the p-system without damping (the equation in \eqref{de0} with $a\equiv 0$), it is known that
 the blow-up of $v$ and $p'$ does not occur with $C^1 _b$ initial data  for $\gamma \geq 1$, which can be easily shown by 
 the fact that Riemann invariants do not change on characteristic curves.
In Cheng  Pan and  Zhu \cite{GPZ}, they have shown that the blow-up of $u$, which physically means the vacuum in fluids, does not occur for $1<\gamma <3$.
For $\gamma \geq 3$, it is open whether the blow-up of $u$ occurs or not.
While, if $\gamma <1$, the blow-up of $v$ and $u$ does not occur.
When $\gamma <-1$, $p'$ does not diverge at $u=0$ ($u=0$ is a zero point of $p'$).
However, when $p'$ goes to zero, the equations loss the strictly hyperbolicity.
The author's papers \cite{s3, s4} give a sufficient condition that the lack of the strictly hyperbolicity
occurs in finite time.
The p-system with $\gamma <1$  would be meaningful in the study of elastic-plastic materials (e.g. Cristescu \cite{NC} and  Ames and  Lohner \cite{al}). 
\end{Remark}

\section{The Euler equation with space independent damping}
In this section, we consider the following Cauchy problem:
\begin{eqnarray} 
\left\{  \begin{array}{ll} \label{de}
  u_t - v_x  =0,\\
 v_t + p(u)_x = -\dfrac{\lambda}{(1+t)^\mu} v , \\   
  (u(0,x), u(0,x))=(u_0 (x), v_0 (x)).  
\end{array} \right.  
\end{eqnarray} 
Before introducing main theorems of this section, we define a function space $X_{m,\delta}$ with constants $m>0$ and $\delta>0$ by
\begin{align*}
X_{m,\delta} = & \{(u,v) \in C^1 _b (\R) \times C^1 _b (\R) \ | \  \| u \|_{L^{\infty}} + \| v \|_{L^{\infty}} \leq M \\
& \mbox{and} \ u(t,x) \geq \delta \ \mbox{for all} \ x \in \R \}
\end{align*}
and put  $c_0=\sqrt{-p'(u_0)}$ and
\begin{eqnarray*} 
\theta_{\gamma} (u) =\left\{ \begin{array}{ll}
\frac{4}{3-\gamma}  u^{\frac{3-\gamma}{4}}\ \ \mbox{for} \ \gamma \not= 3,\\
\log u \ \ \mbox{for} \  \gamma = 3.
\end{array}
\right. 
\end{eqnarray*}

\begin{Thm}\label{main2}
Let  $\gamma > 1$ and $(u_0, v_0) \in X_{m,\delta}$. We assume that $\mu=1$ and $0 \leq \lambda \leq  2$ or  $\mu>1$ and $ \lambda \geq 0$.
Suppose for some constants $u_{-}>0$ and $v_{-} \in \R$ that
\begin{eqnarray}
\lim_{x\rightarrow - \infty} (u_0 (x), v_0 (x)) = (u_{-} , v_{-})
\end{eqnarray} and
$$
\sup_{(x,y) \in \R^2 }|s(0,x) +r(0,y)| <   \frac{1}{\gamma-1} {u_-}^{-(\gamma-1)/2}.
$$
Then there exist $M_1 >0$ and $\delta_1 >0$ such that $(u(t),v (t)) \in X_{m_1, \delta_1}$ for all $t \in [0,T^*)$.
Furthermore, in addition to the above assumptions, we assume for some constant $K_{m ,\delta} >0$ that
\begin{eqnarray} \label{blocon}
r_x (0,x_0 ) \leq -K_{m ,\delta} \ \mbox{or} \ s_x (0,x_0 ) \leq -K_{m ,\delta}
\end{eqnarray}
with some $x_0 \in \R$.
Then we have $T^* < \infty$.
\end{Thm}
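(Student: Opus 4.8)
The plan is to establish the two assertions in turn: first the confinement $(u(t),v(t))\in X_{M_1,\delta_1}$, and then, under \eqref{blocon}, the finite-time derivative blow-up. Throughout I write $a(t)=\lambda/(1+t)^\mu$ and use the characteristic form of the system, in which the Riemann invariants satisfy $r_t-cr_x=-av$ and $s_t+cs_x=-av$ with $c=u^{-(\gamma+1)/2}$, $v=(r+s)/2$ and $\eta=(s-r)/2$. Thus $\dot r=-av$ on the backward characteristic $C_-$ ($\dot x=-c$) and $\dot s=-av$ on the forward characteristic $C_+$ ($\dot x=+c$).

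For the confinement I would first record, along a $C_-$ through $x$ and a $C_+$ through $y$, the identity $\frac{d}{dt}\Phi(t,x,y)=-\frac{a}{2}[\Phi(t,x,y)+\Phi(t,y,x)]$ (and its analogue for $\Psi$), and read off at extremal points that $\|\Phi(t)\|_{L^\infty}$ and $\|\Psi(t)\|_{L^\infty}$ are non-increasing. Evaluating at $y=x$ gives $v=\frac12\Phi(t,x,x)$ and $\eta=-\frac12\Psi(t,x,x)$, so $v$ is bounded and $\eta\le\frac12\|\Psi(0)\|_{L^\infty}$; since $\eta=\frac{2}{\gamma-1}u^{-(\gamma-1)/2}$, this is a lower bound $u\ge\delta_1>0$, already excluding the blow-up of $p'$. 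The delicate bound is the upper bound on $u$, i.e. a positive lower bound on $\eta$. Here I would use that the left end state is frozen: tracing characteristics back to the constant state at $x=-\infty$ (where $u_x=v_x=0$) shows $r_-(t):=\lim_{x\to-\infty}r(t,\cdot)$ and $s_-(t)$ both satisfy $\dot r_-=\dot s_-=-av_-$ with $v_-=\lim_{x\to-\infty}v$, so their difference is preserved, $s_-(t)-r_-(t)\equiv2\eta_-$ where $\eta_-=\frac{2}{\gamma-1}u_-^{-(\gamma-1)/2}$. Letting $x\to-\infty$, resp. $y\to-\infty$, in $|r(t,x)+s(t,y)|\le\|\Phi(0)\|_{L^\infty}=:\Phi_0$ yields $\eta(t,x)\ge\eta_--\Phi_0$, and the hypothesis $\Phi_0<\frac{1}{\gamma-1}u_-^{-(\gamma-1)/2}=\frac12\eta_-$ makes this strictly positive. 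This is exactly where the sharp smallness assumption on $\sup|s(0,\cdot)+r(0,\cdot)|$ is consumed; it gives $u\le M_1$, and with the previous bounds, $(u(t),v(t))\in X_{M_1,\delta_1}$.

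For the blow-up I would differentiate $s_t+cs_x=-av$ along $C_+$ and pass to the Lax variable $\alpha=\sqrt c\,s_x$ (note $\theta_\gamma'=\sqrt c$, which is the origin of $\theta_\gamma$), and symmetrically $\beta=\sqrt c\,r_x$ along $C_-$. Using $\frac{d}{dt}u=r_x$ on $C_+$ (resp. $s_x$ on $C_-$), the genuinely nonlinear cross terms cancel and one is left with the Riccati pair
\begin{align*}
\frac{d\alpha}{dt}&=-k(u)\alpha^2-\frac{a}{2}\alpha-\frac{a}{2}\beta\quad\text{on }C_+,\\
\frac{d\beta}{dt}&=-k(u)\beta^2-\frac{a}{2}\beta-\frac{a}{2}\alpha\quad\text{on }C_-,
\end{align*}
where $k(u)=\frac{\gamma+1}{4}u^{(\gamma-3)/4}$ is bounded below by some $k_1>0$ thanks to the confinement $\delta_1\le u\le M_1$ just proved. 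In the undamped case these decouple to $\dot\alpha=-k\alpha^2$, which blows up the instant $\alpha<0$; the damping only introduces the coupling $-\frac{a}{2}\beta$.

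That coupling is the crux: the equation for $s_x$ feeds on $r_x$ and conversely, so no single characteristic closes. I would circumvent this by following the spatial minimum $m(t)=\inf_x\min(\sqrt c\,r_x,\sqrt c\,s_x)$. Because $u,v$ tend to constants as $x\to-\infty$ the derivatives vanish there, so once $m(t)$ is sufficiently negative its infimum is attained at a finite point where the spatial derivative of the minimizing quantity is zero; evaluating the Riccati pair there and using $\sqrt c\,r_x,\sqrt c\,s_x\ge m$ closes the estimate as $\dot m\le-k_1m^2-a(t)m$. With $n=-m$ this reads $\dot n\ge k_1n^2-a(t)n\ge k_1n^2-\lambda n$; choosing $K_{m,\delta}$ so large that \eqref{blocon} forces $n(0)\ge2\lambda/k_1$, the quadratic term dominates, $\dot n\ge\frac{k_1}{2}n^2$, and $n$, hence $|s_x|$ or $|r_x|$, diverges in finite time. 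Since $u,v,p'$ remain bounded by the first part, this derivative blow-up forces $T^*<\infty$. The decay hypotheses on $(\mu,\lambda)$ serve to keep $a(t)$ subordinate to the quadratic term in this last step, and the attainment of the minimum together with the rigorous justification of the frozen left state are the two technical points I expect to cost the most work.
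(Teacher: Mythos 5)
Your first half follows the paper's own route (the two--point functions $\Phi,\Psi$, the frozen left state, and the smallness hypothesis to pin $\eta$ near $\eta_-$), and your conclusions there are correct; but note that your justification of the monotonicity of $\|\Phi(t)\|_{L^\infty}$ is too loose. At an extremal pair the right-hand side $-\tfrac{a}{2}[(r+s)(x)+(r+s)(y)]$ involves values of $\Phi$ at \emph{other} (diagonal) points, so the extremal-point argument only yields $D^+\|\Phi\|\le a\|\Phi\|$, i.e.\ a factor $e^{\frac12\int_0^t a}=(1+t)^{\lambda/2}$ for $\mu=1$, which is unbounded and would destroy the upper bound on $u$ precisely in the critical cases $\mu=1$, $0<\lambda\le 2$. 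The paper gets genuine non-increase (Lemma \ref{esP}) by writing the $A(t)$-weighted integral equations \eqref{rri}--\eqref{ssi} along the two characteristics and running a weighted Gronwall argument; you need that version, not the extremal-point one.

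The second half has a genuine gap. Your plan closes the Riccati coupling by following the spatial infimum $m(t)=\inf_x\min(\sqrt{c}\,r_x,\sqrt{c}\,s_x)$, which requires the infimum to be attained at a finite point where the spatial derivative of the minimizer vanishes. Neither ingredient is available: nothing whatsoever is assumed about the data as $x\to+\infty$, and at $x\to-\infty$ only $\lim(u_0,v_0)=(u_-,v_-)$ is assumed, which does \emph{not} imply $u_0',v_0'\to 0$ (a function can converge while its derivative oscillates without decaying); moreover the solution is only $C^1$, so $r_x$ need not be differentiable in $x$ at an interior minimizer, and passing from a pointwise inequality at (possibly non-attained) minimizers to $\dot m\le -k_1m^2-am$ requires a nontrivial Dini-derivative/comparison argument you do not supply. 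The paper avoids all of this by a single-characteristic argument: the same identity you use to cancel the cross terms, $\partial_- u=s_x$, shows that along the \emph{minus} characteristic the coupling is a total derivative, $\sqrt{c}\,s_x=\tfrac{d}{dt}\theta_\gamma(u)$ (identity \eqref{s-1}); integrating the coupling term in \eqref{rs} by parts (formula \eqref{eq1}) turns it into boundary terms and integrals of $\theta_\gamma(u)$, all bounded by the first half, so the Riccati inequality \eqref{bl-y} closes along one characteristic with no global control of $s_x$. This is also where the hypotheses on $(\mu,\lambda)$ genuinely enter — boundedness (or favorable sign) of the integrated-by-parts terms and $\int_0^\infty A(\tau)^{-1}d\tau=\infty$ — whereas your blow-up step never uses them beyond $a\le\lambda$, a symptom that the step doing the real work is the one that is unproven.
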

The first half in Theorem \ref{main2} give a sufficient condition for the boundedness of $u$, $v$ and $p'$ on $[0,T^*)$.
The second half give a sufficient condition that the derivative blow-up occurs with $u$, $v$ and $p'$ bounded.
By using the method of  Cheng,  Pan and  Zhu \cite{GPZ}, when  $1 <\gamma <3$, $\mu\geq 1$ and $0 \leq \lambda \leq 2$, we can relax assumptions in Theorem \ref{main2} as follows:
\begin{Thm}\label{main1}
Let $1 <\gamma <3$, $\mu\geq 1$ and $0 \leq \lambda \leq 2$ and $(u_0, v_0) \in X_{m,\delta}$. 
If
\begin{eqnarray} \label{blocon2}
\sqrt{c_0 (x_0)} r_x (0,x_0 ) < -\dfrac{\lambda}{2} \theta_{\gamma} (u_0 (x_0)) 
\end{eqnarray}
or
\begin{eqnarray}
\sqrt{c_0 (x_0)} s_x (0,x_0 ) < -\dfrac{\lambda}{2} \theta_{\gamma} (u_0 (x_0)) 
\end{eqnarray}
is satisfied for some $x_0 \in \R$.
Then we have $T^* < \infty$ 
and there exist $m_1 >0$ and $\delta_1 >0$ such that $(u(t),v (t)) \in X_{m_1, \delta_1}$ for all $t \in [0,T^*)$.
\end{Thm}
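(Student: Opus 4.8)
The plan is to run a Lax-type argument on the Lax variables $y=\sqrt{c}\,s_x$ and $z=\sqrt{c}\,r_x$, combined with the vacuum-avoidance estimate of Cheng--Pan--Zhu \cite{GPZ}, and to exploit a cancellation tailored to the weight $\theta_\gamma$. Writing $a(t)=\lambda/(1+t)^\mu$, along the characteristics $C_\pm:\ dx/dt=\pm c$ the Riemann invariants obey $\tfrac{d}{dt}s=-av$ on $C_+$ and $\tfrac{d}{dt}r=-av$ on $C_-$, from which $\tfrac{du}{dt}=r_x$ on $C_+$ and $\tfrac{du}{dt}=s_x$ on $C_-$. The factor $\sqrt c$ is exactly the integrating factor that removes the $r_xs_x$ cross term, and differentiating the invariant equations in $x$ yields
\begin{align*}
\frac{d}{dt}\Big|_{+}y &= -\frac{\gamma+1}{4}u^{(\gamma-3)/4}y^{2}-\frac{a}{2}(y+z),\\
\frac{d}{dt}\Big|_{-}z &= -\frac{\gamma+1}{4}u^{(\gamma-3)/4}z^{2}-\frac{a}{2}(y+z).
\end{align*}
The decisive structural fact is that $\theta_\gamma'(u)=u^{-(\gamma+1)/4}=\sqrt{c}$, so that $\tfrac{d}{dt}\theta_\gamma(u)=z$ on $C_+$ and $\tfrac{d}{dt}\theta_\gamma(u)=y$ on $C_-$; the threshold in \eqref{blocon2} is built from precisely this quantity.

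Next I would secure the a priori bounds that confine every singularity to a derivative blow-up. The $L^\infty$ estimates on the Riemann invariants from the preceding lemmas (the bounds on $\Phi(t,x,y)=r(t,x)+s(t,y)$ and $\Psi(t,x,y)=r(t,x)-s(t,y)$) bound $v=(r+s)/2$ and bound $\eta=(s-r)/2$ from above, hence bound $u$ from below by some $\delta_1>0$. The reverse bound---$u$ bounded above, equivalently $c$ bounded below, i.e. the non-formation of vacuum---is where $1<\gamma<3$ enters: I would adapt the time-dependent density estimate of \cite{GPZ} to the damped system to obtain $u(t,x)\le U(t)$ with controlled growth, the damping terms being harmless because $\mu\ge1$ and $0\le\lambda\le2$. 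Together these yield $(u,v)\in X_{m_1,\delta_1}$ on $[0,T^*)$, with $p'$ bounded there.

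The third step turns the coupled pair into a single Riccati inequality. Setting $\tilde z=z+\tfrac a2\theta_\gamma(u)$ and differentiating along $C_-$, the contribution $\tfrac a2\tfrac{d}{dt}\theta_\gamma(u)=\tfrac a2 y$ cancels the coupling $-\tfrac a2 y$ exactly, leaving the identity
\begin{align*}
\frac{d}{dt}\Big|_{-}\tilde z=-\frac{\gamma+1}{4}u^{(\gamma-3)/4}z^{2}-\frac{a}{2}z+\frac{a'}{2}\theta_{\gamma}(u).
\end{align*}
Substituting $z=\tilde z-\tfrac a2\theta_\gamma(u)$, using $\theta_\gamma>0$ (so $z\le\tilde z$ and hence $z^2\ge\tilde z^2$ once $\tilde z<0$) together with the elementary inequality $\tfrac{a^2}{2}+a'\le0$---which holds for all $t\ge0$ precisely when $\mu\ge1$ and $0\le\lambda\le2$---I arrive at
\begin{align*}
\frac{d}{dt}\Big|_{-}\tilde z\le-\frac{\gamma+1}{4}u^{(\gamma-3)/4}\tilde z^{2}-\frac{a}{2}\tilde z\qquad\text{on }\{\tilde z<0\}.
\end{align*}

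Finally, since $a(0)=\lambda$, condition \eqref{blocon2} reads exactly $\tilde z(0,x_0)<0$. Arguing by contradiction, suppose $T^*=\infty$; then $u\le U(t)$ for all $t$. The integrating factor $w=e^{A}\tilde z$ with $A=\tfrac12\int_0^t a(s)\,ds$ removes the linear term and gives
\begin{align*}
\frac{dw}{dt}\le-\tilde K\,w^{2},\qquad \tilde K=\frac{\gamma+1}{4}u^{(\gamma-3)/4}e^{-A},
\end{align*}
with $w(0)<0$, so $1/w$ increases at rate $\ge\tilde K$ and $w\to-\infty$ at the finite time where $\int_0^t\tilde K=1/|w(0)|$. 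Such a time exists because $\int_0^\infty\tilde K\,dt=\infty$: for $\mu>1$ the weight $e^{-A}$ is bounded below, while for $\mu=1$ one has $e^{-A}=(1+t)^{-\lambda/2}$, which is non-integrable precisely because $\lambda\le2$. This contradicts global regularity, so $T^*<\infty$; the $s_x$ case is identical with $\tilde y=y+\tfrac a2\theta_\gamma(u)$ on $C_+$. The main obstacle is the damped version of the Cheng--Pan--Zhu bound together with the verification that the growth of $U$ is slow enough for $\int_0^\infty\tilde K$ to diverge---in the critical case $\mu=1,\ \lambda=2$ this essentially forces $U$ to be bounded---and this is the one place where the quantitative restrictions on $\gamma$, $\mu$, $\lambda$ are indispensable.
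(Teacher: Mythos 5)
Your proposal is, in substance, the paper's own proof written in differential rather than integral form: your substitution $\tilde z=z+\tfrac a2\theta_\gamma(u)$ is exactly the paper's integration by parts of the coupling term (the passage from \eqref{rs} to \eqref{y-int}--\eqref{eq1}), your hypothesis $\tfrac{a^2}{2}+a'\le0$ is exactly the paper's observation that $\tfrac{\mu}{1+\tau}-\tfrac{\lambda}{2(1+\tau)^\mu}\ge0$ when $\mu\ge1$ and $0\le\lambda\le2$, and your final Riccati step with the weight $\tilde K$ is the paper's differential inequality with the non-integrable weight $\tilde A(\tau)^{-1}$. Two remarks. First, the one step you defer --- the damped Cheng--Pan--Zhu bound $u\le U(t)$ --- is precisely Lemmas \ref{tdlb} and \ref{upb} of the paper, and it requires no input beyond what you already wrote: in your identity for $\tilde z$ the quadratic term is nonpositive, so dropping it gives $\partial_-\bigl(e^{A}\tilde z\bigr)\le0$, hence $z\le\tilde z\le Ce^{-A(t)}$ pointwise, and integrating $\partial_+\theta_\gamma(u)=z$ along plus characteristics yields $u^{(3-\gamma)/4}\le C\bigl(1+\int_0^t e^{-A(\tau)}\,d\tau\bigr)$, which is the required $U(t)$ (this is where $1<\gamma<3$ enters, through $\theta_\gamma>0$). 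Second, your claim that the critical case $\mu=1$, $\lambda=2$ ``essentially forces $U$ to be bounded'' is miscalibrated and, taken literally, would block your program, since boundedness of $u$ is not available there: what the above estimate gives is $u^{(3-\gamma)/4}\le C\log(e+t)$, and this logarithmic growth suffices, because $e^{-A}=(1+t)^{-1}$ and $\int_0^\infty\bigl((1+t)\log(e+t)\bigr)^{-1}dt=\infty$. With these two points made explicit, your argument is complete and coincides with the paper's.
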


 \begin{Remark} \label{prest}{\bf (Remark on the restriction of $\gamma$).}
We can show the similar result to in Theorem \ref{main2} for all $\gamma \in \R \setminus  \{ -1 \}$ (we note that the equations \eqref{de} is linear, when $\gamma=-1$).
In fact, we can show similar  estimates to in Lemmas \ref{esP} \ref{inf0} and \ref{upes} in the same way as their proofs.
However, when $\gamma<-1$, the assumption \eqref{blocon} is replaced by
\begin{eqnarray*} 
r_x (0,x_0 ) \geq  K_{m ,\delta} \ \mbox{or} \ s_x (0,x_0 ) \geq  K_{m ,\delta}.
\end{eqnarray*}
While, it would be difficult to remove the restriction $1 <\gamma <3$ in Theorem \ref{main1} (see Remark \ref{bpp}). 
\end{Remark}

\subsection{Preliminaries for the proofs of Theorems \ref{main2} and \ref{main1}}
First, we introduce some useful identities, which are based on Lax's formulas in \cite{lax}.
For $c=\sqrt{-p' (u)}$, the plus and minus characteristic curves are solutions to the following deferential equations:
\begin{eqnarray*}
\frac{x_{\pm}}{dt}(t) = \pm  c(t,u(t,x_{\pm} (t))).
\end{eqnarray*}
Riemann invariants $r$ and $s$ (see \eqref{ri} for their definitions) are solutions to 
\begin{eqnarray}\label{ww}\left\{
\begin{array}{ll} 
\partial_- r =-\dfrac{\lambda}{2(1+t)^\mu}(r+s),\\
 \partial_+ s =-\dfrac{\lambda}{2(1+t)^\mu}(r+s),
\end{array}\right.
\end{eqnarray}
where $\partial_{\pm} = \partial_t \pm c \partial_x$. We put $A(t)=\exp(\int_0 ^t \frac{\lambda}{2(1+\tau)^\mu} d\tau)$.
 These equations can be written as
\begin{eqnarray}\label{ww2}\left\{
\begin{array}{ll} 
\partial_- ( A(t) r) =- \dfrac{\lambda A(t) s}{2(1+t)^\mu},\\
\partial_+ (A(t) s) =- \dfrac{\lambda A(t) r}{2(1+t)^\mu}.
\end{array}\right.
\end{eqnarray}
While, differentiating the equations in \eqref{ww} with $x$, from the identity $s_x - r_x = 2 \eta_x =-2c u_x$, we have
\begin{eqnarray}\label{rsx}\left\{
\begin{array}{ll} 
\partial_- r_x =\dfrac{c'}{2c}r_x (s_x -r_x) -\dfrac{\lambda}{2(1+t)^\mu}(r+s),\\
 \partial_+  s_x =\dfrac{c'}{2c}r_x (r_x -s_x) -\dfrac{\lambda}{2(1+t)^\mu}(r+s),
\end{array}\right.
\end{eqnarray}
Multiplying  the both side of the equations in \eqref{rsx} by $A(t) \sqrt{c}$, we have
\begin{eqnarray} \label{rs}\left\{
\begin{array}{ll} 
\partial_- y =-A(t)^{-1} \dfrac{\gamma +1}{4}u^{\frac{\gamma -3}{4}}y^2  - \dfrac{\lambda q}{2(1+t)^{\mu}},\\
\displaystyle \partial_+ q =-A(t)^{-1}  \dfrac{\gamma +1}{4}u^{\frac{\gamma -3}{4}}q^2  - \dfrac{\lambda y }{2(1+t)^{\mu}},
\end{array}\right.
\end{eqnarray}
where $y= A(t) \sqrt{c} r_x$ and $q= A(t) \sqrt{c} s_x$. 
Now we rewrite \eqref{rs} as  integral equalities.
Since it holds that 
\begin{eqnarray}
\sqrt{c}s_x (t,x_{-} (t)) =   \frac{d}{dt}\theta_{\gamma}  (u(t,x_{-} (t))), \label{s-1}
\end{eqnarray}
from integration by parts, we  have
\begin{eqnarray}
\int_0 ^t \dfrac{\lambda q(t,x_{-} (\tau))}{2(1+\tau)^{\mu}} d\tau =\int_0 ^t \dfrac{2 A(\tau)}{\lambda(1+\tau)^{\mu}} \left( \dfrac{\mu}{(1+\tau)} - \dfrac{\lambda }{2(1+\tau)^{\mu}} \right) \lambda \theta_{\gamma} (\tau,x_{-} (\tau))d\tau \notag \\
 + \frac{\lambda}{2} \left( \dfrac{A(t) \theta_{\gamma}(t,x_{-}(t)) }{(1+t)^{\mu}}- \theta_{\gamma} (0,x_{-} (0))\right). \label{y-int}
\end{eqnarray}
From the first equation in \eqref{rs} and \eqref{y-int}, $y$  can be written on the minus characteristic curve through $(t,x)$ as follows:
\begin{align}
y(t,x)  =& y(0,x_{-} (0)) -\int_0 ^t \dfrac{\lambda A(\tau)}{2(1+\tau)^{\mu}} \left( \dfrac{\mu}{(1+\tau)} - \dfrac{\lambda }{2(1+\tau)^{\mu}} \right) \lambda \theta_{\gamma} (\tau,x_{-} (\tau))d\tau \notag \\
& - \frac{\lambda}{2} \left( \dfrac{A(t) \theta_{\gamma}(t,x_{-}(t)) }{(1+\tau)^{\mu}}- \theta_{\gamma} (0,x_{-} (0))\right) \notag \\
&-\int_0 ^t A(\tau)^{-1} \dfrac{\gamma +1}{4}u^{\frac{\gamma -3}{4}}y^2 (\tau,x_{-} (\tau)) d\tau. \label{eq1}
\end{align}
In the same way as above, we can obtain the similar identity for $q$.  
Next, we introduce  key inequalities  which ensure the boundedness of  $v$ and $p'$ in Theorem \ref{main2} and \ref{main1}.
We put $\Phi(t,x,y) = r(t,x) +s(t,y)$ and $\Psi(t,x,y)= s(t,x)-r(t,y)$.
\begin{Lemma} \label{esP} 
Let $\gamma >1$, $\mu \geq 0$   and $\lambda \geq 0$. For $C^1$ solution of \eqref{de}, it  holds a priory that
\begin{eqnarray}
\|\Phi (t) \|_{L^{\infty} (\R^2)} \leq \|\Phi (0) \|_{L^{\infty} (\R^2)},\label{pes} \\
\|\Psi (t) \|_{L^{\infty} (\R^2)} \leq \|\Psi (0) \|_{L^{\infty} (\R^2)}.\label{mes}
\end{eqnarray}
\end{Lemma}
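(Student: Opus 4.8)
The plan is to prove both \eqref{pes} and \eqref{mes} by the same mechanism: reduce the supremum over $\R^2$ to a pointwise estimate along a pair of characteristics, exploit an algebraic identity that keeps the coupling term inside the $L^\infty$-norm of the \emph{same} quantity, and then close with Gronwall's inequality using $A(t)$ as an integrating factor.

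I would first fix $t$ and a point $(x,y)$, let $x_-(\cdot)$ be the minus characteristic with $x_-(t)=x$ and $y_+(\cdot)$ the plus characteristic with $y_+(t)=y$ (both well defined on $[0,t]$ since $c=\sqrt{-p'(u)}$ is $C^1$ and bounded for a $C^1_b$ solution with $u\ge\delta$), and differentiate $\Phi$ along this pair using \eqref{ww}:
\[
\frac{d}{d\tau}\Phi(\tau, x_-(\tau), y_+(\tau)) = \partial_- r(\tau, x_-(\tau)) + \partial_+ s(\tau, y_+(\tau)) = -\frac{\lambda}{2(1+\tau)^\mu}\big[(r+s)(\tau,x_-(\tau)) + (r+s)(\tau,y_+(\tau))\big].
\]
The crucial step is to regroup the bracket as $\Phi(\tau,x_-(\tau),y_+(\tau)) + \Phi(\tau,y_+(\tau),x_-(\tau))$, i.e.\ the quantity itself plus its argument-swapped copy. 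The analogous computation for $\Psi$ along the plus characteristic through $x$ and the minus characteristic through $y$ yields the same structure, the coupling term again being $\pm\Psi$ evaluated at swapped arguments. This is precisely the point of introducing $\Phi$ and $\Psi$: the right-hand side never produces a term outside $\{\Phi(\tau,\cdot,\cdot)\}$ (resp.\ $\{\Psi\}$), so it is controlled by $\|\Phi(\tau)\|_{L^\infty(\R^2)}$ (resp.\ $\|\Psi\|$) with no individual bound on $r$ or $s$ required.

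Next I would multiply by $A(t)$ and use $A'=\frac{\lambda}{2(1+t)^\mu}A$ to put the equation in divergence form $\frac{d}{d\tau}(A\Phi_{\mathrm{main}}) = -\frac{\lambda A}{2(1+\tau)^\mu}\Phi_{\mathrm{swap}}$. Integrating from $0$ to $t$, taking absolute values and bounding $|\Phi_{\mathrm{swap}}(\tau)|\le P(\tau):=\|\Phi(\tau)\|_{L^\infty(\R^2)}$ gives
\[
A(t)|\Phi(t,x,y)| \le P(0) + \int_0^t \frac{\lambda A(\tau)}{2(1+\tau)^\mu}\,P(\tau)\,d\tau.
\]
Since the right-hand side is independent of $(x,y)$, taking the supremum yields $A(t)P(t) \le P(0) + \int_0^t \frac{\lambda}{2(1+\tau)^\mu}\,A(\tau)P(\tau)\,d\tau$. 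Setting $H=AP$ and applying Gronwall's inequality gives $H(t)\le P(0)\exp(\int_0^t \frac{\lambda}{2(1+\tau)^\mu}d\tau)=P(0)A(t)$, hence $P(t)\le P(0)$, which is \eqref{pes}; the identical argument applied to $\Psi$ gives \eqref{mes}.

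The only genuine obstacle is the regrouping identity in the second paragraph: once the coupling is recognized as a swapped copy of the same function, the integrating-factor and Gronwall steps are routine. A minor point I would state carefully is that the supremum over the unbounded $\R^2$ need not be attained, which is exactly why the estimate is performed pointwise along characteristics and the supremum is taken only afterwards.
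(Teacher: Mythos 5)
Your proof is correct and takes essentially the same route as the paper: both arguments apply the integrating factor $A(t)$ to the Riemann-invariant equations along the pair of characteristics ending at $(t,x)$ and $(t,y)$, recognize that the coupling term is exactly $\Phi$ (resp.\ $\pm\Psi$) evaluated at swapped arguments---so it is controlled by $\|\Phi(\tau)\|_{L^{\infty}(\R^2)}$ without any separate bound on $r$ or $s$---and then close by taking the supremum in $(x,y)$ and applying Gronwall. The only cosmetic difference is that the paper sums the two already-integrated identities \eqref{rri} and \eqref{ssi} (i.e.\ the divergence forms \eqref{ww2}), whereas you differentiate $\Phi$ along the characteristic pair first and introduce $A(t)$ afterwards; the resulting integral identity is the same.
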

\begin{proof}
We only prove \eqref{pes}. \eqref{mes} can be shown in the same way as the proof of \eqref{pes}.
On the characteristic curves $x_{-}(\cdot )$ and $x_+ (\cdot )$ through $(t,x)$ and $(t,y)$ respectively, 
$r$ and $s$ can be written by
\begin{eqnarray}
A(t) r(t,x) =r(0,x_{-} (0))-\int_0 ^t  \dfrac{\lambda A(\tau) s(\tau , x_{-}(\tau)) }{2(1+\tau)^{\mu}} d \tau \label{rri},  \\
A(t) s(t,y) =s(0,x_{+} (0))-\int_0 ^t  \dfrac{\lambda A(\tau) r(\tau , x_{+}(\tau)) }{2(1+\tau)^{\mu}}  d \tau. \label{ssi}
\end{eqnarray}
Summing up the above equations, we have
\begin{align} 
A(t) \Phi(t,x,y) =&r(0,x_{-} (0))+s(0,x_{+} (0))  \notag \\
&- \int_0 ^t  \dfrac{\lambda A(\tau) (s(\tau , x_{-}(\tau))+ r(\tau , x_{+}(\tau)))}{2(1+\tau)^{\mu}} d \tau. \label{risi}
\end{align}
Taking $L^{\infty}$-norm with $(x,y) \in \R^2$ in \eqref{risi}, we obtain that
\begin{eqnarray*}
A(t) \|\Phi (t) \|_{L^{\infty} (\R^2)}\leq \|\Phi (0) \|_{L^{\infty} (\R^2)}+ \int_0 ^t  \dfrac{\lambda  A(\tau) \|\Phi (\tau) \|_{L^{\infty} (\R^2)}}{2(1+\tau)^{\mu}} d \tau.
\end{eqnarray*}
We check that \eqref{pes} can be obtained by the Gronwall inequality.
We put 
$$\Xi (t) = \|\Phi (0) \|_{L^{\infty} (\R^2)}+ \int  \frac{\lambda A(\tau)  \|\Phi (\tau) \|_{L^{\infty} (\R^2)}}{2(1+\tau)^{\mu}} d \tau.$$
 From the above inequality,
it follows that
\begin{eqnarray*}
\dfrac{d \Xi (t)}{dt} \leq \dfrac{\lambda}{2(1+t)^{\mu}} \Xi (t).
\end{eqnarray*}
Multiplying the both side in the above inequality by $A(t)^{-1}$, we have
\begin{eqnarray*}
\dfrac{d}{dt} ( A(t)^{-1} \Xi (t))\leq 0
\end{eqnarray*}
Hence we have $\Xi(t) \leq A(t) \Xi(0)=A(t) \Phi (0) $, which implies \eqref{pes}.
\end{proof}

Lemma \ref{esP} implies that the boundedness of $v$ and $\eta$. Therefore, we have
\begin{eqnarray} \label{lb}
u(t,x) \geq \delta_1 \  \mbox{and} \ \| v (t) \|_{L^\infty} \leq \frac{m_1}{2} 
\end{eqnarray}
for some $m_1 >0$ and $\delta_1 >0$. In order to get a upper bound of $u$ in Theorem \ref{main2}, we prepare the following lemma.
\begin{Lemma} \label{inf0}
Let $\gamma >1$, $\mu \geq 0$ and $\lambda \geq 0$. Suppose that $\lim_{x\rightarrow - \infty}   (u_0 (x), v_0 (x))=(u_-, v_-)$ for $u_- >0$ and $v_- \in \R$.
Then we have that for $C^1$ solutions
\begin{eqnarray} \label{infuu}
\lim_{x\rightarrow - \infty} u (t,x)=u_-
\end{eqnarray}
for all $t \in [0,T^*)$.
\end{Lemma}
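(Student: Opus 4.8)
The plan is to reduce the statement about $u$ to one about the Riemann invariants and then to propagate the existence of the spatial limit at $x=-\infty$ from $t=0$ to every later time. Since $\gamma>1$, the map $u\mapsto \eta=\tfrac{2}{\gamma-1}u^{-(\gamma-1)/2}$ is a continuous strictly decreasing bijection of $(0,\infty)$ onto itself, and $\eta=(s-r)/2$. Hence \eqref{infuu} is equivalent to $\lim_{x\to-\infty}(s(t,x)-r(t,x))=2\eta_-$ with $\eta_-=\tfrac{2}{\gamma-1}u_-^{-(\gamma-1)/2}$. I would first introduce the solution $(\tilde r(t),\tilde s(t))$ of the ``limit'' system obtained by dropping the $x$-derivatives,
\[
\dot{\tilde r}=\dot{\tilde s}=-\frac{\lambda}{2(1+t)^\mu}(\tilde r+\tilde s),\qquad \tilde r(0)=v_--\eta_-,\quad \tilde s(0)=v_-+\eta_-.
\]
Because the two right-hand sides coincide, $\tilde s-\tilde r\equiv 2\eta_-$ is constant in time; this is exactly the heuristic that at $x=-\infty$ one has $v_x,\,p(u)_x\to 0$, so the first equation of \eqref{de} forces $\partial_t u\to 0$ there. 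It therefore suffices to show $r(t,x)\to\tilde r(t)$ and $s(t,x)\to\tilde s(t)$ as $x\to-\infty$.

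Second, I would use finite propagation speed. Fix $t\in[0,T^*)$; by the definition of $T^*$ the quantity $\|p'(u)\|_{L^\infty}$ is bounded on $[0,t]$, so $c=\sqrt{-p'(u)}\le C$ there, and every characteristic foot satisfies $|x_\pm(\tau)-x|\le Ct$ for $\tau\in[0,t]$. Consequently $x_\pm(\tau)\to-\infty$ uniformly in $\tau$ as $x\to-\infty$. Writing the limit system in the integral form analogous to \eqref{rri}--\eqref{ssi} (multiplied by $A$) and subtracting it from \eqref{rri} and \eqref{ssi} gives
\begin{align*}
A(t)(r(t,x)-\tilde r(t)) &= \bigl(r(0,x_-(0))-\tilde r(0)\bigr) - \int_0^t \frac{\lambda A(\tau)}{2(1+\tau)^\mu}\bigl(s(\tau,x_-(\tau))-\tilde s(\tau)\bigr)\,d\tau,\\
A(t)(s(t,x)-\tilde s(t)) &= \bigl(s(0,x_+(0))-\tilde s(0)\bigr) - \int_0^t \frac{\lambda A(\tau)}{2(1+\tau)^\mu}\bigl(r(\tau,x_+(\tau))-\tilde r(\tau)\bigr)\,d\tau.
\end{align*}

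Third, set $\rho(t)=\limsup_{x\to-\infty}|r(t,x)-\tilde r(t)|$ and $\sigma(t)=\limsup_{x\to-\infty}|s(t,x)-\tilde s(t)|$, both finite since $r,s$ are bounded on $[0,t]$; the hypothesis $\lim_{x\to-\infty}(u_0,v_0)=(u_-,v_-)$ gives $\rho(0)=\sigma(0)=0$. Taking $\limsup_{x\to-\infty}$ in the two identities, the boundary terms vanish, and since the integrands are uniformly bounded I would pass the $\limsup$ through the time integral by the reverse Fatou lemma, using $x_\pm(\tau)\to-\infty$ to bound $\limsup_{x\to-\infty}|s(\tau,x_-(\tau))-\tilde s(\tau)|\le\sigma(\tau)$ and symmetrically for $r$. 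This yields $A(t)\rho(t)\le\int_0^t \tfrac{\lambda A(\tau)}{2(1+\tau)^\mu}\sigma(\tau)\,d\tau$ and $A(t)\sigma(t)\le\int_0^t \tfrac{\lambda A(\tau)}{2(1+\tau)^\mu}\rho(\tau)\,d\tau$. Adding them and setting $W(t)=A(t)(\rho(t)+\sigma(t))\ge 0$ gives $W(t)\le\int_0^t \tfrac{\lambda}{2(1+\tau)^\mu}W(\tau)\,d\tau$ with $W(0)=0$, so Gronwall's inequality forces $W\equiv 0$. Hence $\rho\equiv\sigma\equiv 0$, so $r(t,\cdot)\to\tilde r(t)$, $s(t,\cdot)\to\tilde s(t)$, and therefore $\eta(t,x)\to\eta_-$ and $u(t,x)\to u_-$.

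The step I expect to be the main obstacle is the passage to the limit at later times: the limit at $t=0$ is given, but for $t>0$ the invariants $r$ and $s$ live on different families of characteristics, so $s-r$ has no closed evolution at a fixed point. The coupling is resolved by carrying $\rho$ and $\sigma$ together and closing with Gronwall; the two technical points requiring care are the uniform convergence $x_\pm(\tau)\to-\infty$ (needed to move the limit inside the time integral) and the justification of the reverse Fatou step, both of which rely only on the uniform bounds on $r,s,c$ available on $[0,t]$ for $t<T^*$.
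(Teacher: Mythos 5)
Your proof is correct, and it is sound at the points that need care: the uniform bound on $c$ coming from the definition of $T^*$ does give $x_{\pm}(\tau)\to-\infty$ uniformly on $[0,t]$, the reverse-Fatou step is legitimate because the integrands are uniformly bounded on the compact time interval, and the Gronwall closure on $W=A(\rho+\sigma)$ with $W(0)=0$ is valid. Your route, however, differs from the paper's. The paper introduces no auxiliary solution: it works directly with $2\eta=s-r$, expresses $2A(t)\eta(t,x)$ through the characteristic formulas \eqref{rri}--\eqref{ssi}, sets $S(t)=\limsup_{x\to-\infty}2\eta(t,x)$ and $I(t)=\liminf_{x\to-\infty}2\eta(t,x)$, and closes two one-sided integral inequalities to conclude $S\equiv I\equiv\frac{4}{\gamma-1}u_-^{-(\gamma-1)/2}$. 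Your comparison with the spatially homogeneous solution $(\tilde r,\tilde s)$ buys two genuine advantages. First, it cleanly handles the one delicate point of the paper's argument: in the paper's integrand the invariants are evaluated at two \emph{different} points, $s(\tau,x_-(\tau))-r(\tau,x_+(\tau))$, and bounding the limit superior of this quantity by $S(\tau)$ (a limit superior of $s-r$ at a \emph{single} point) is not immediate; a fully rigorous version has to split into $\limsup_{x\to-\infty}s$ and $\liminf_{x\to-\infty}r$ and run the same Gronwall scheme on that pair. Your $\rho,\sigma$ bookkeeping is exactly such a splitting, built in from the start, since each of your integrals compares one invariant at one point against a function of $t$ alone. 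Second, your argument proves strictly more: $r(t,\cdot)$ and $s(t,\cdot)$ converge individually at $x=-\infty$, hence $\lim_{x\to-\infty}v(t,x)=(\tilde r(t)+\tilde s(t))/2$ exists --- precisely the limit that Remark \ref{plimit} says the paper's proof deliberately does not provide (and suggests would otherwise require revisiting the local existence theory); your a priori comparison gets it for free. What the paper's version buys in exchange is brevity: one scalar quantity and no auxiliary ODE. The only point worth adding to your write-up is that $\rho$ and $\sigma$ are measurable in $t$ (they are infima over $R\in\N$ of suprema of continuous functions), so the Gronwall iteration applies; the paper's $S$ and $I$ require the same remark.
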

\begin{proof}
We put $$ S(t)= \limsup_{x \rightarrow - \infty} 2 \eta (t,x) \ \mbox{and} \   I (t) = \liminf_{x \rightarrow  \infty} 2\eta (t,x) .$$ 
From \eqref{rri} and \eqref{ssi}, we have 
\begin{align*}
2 A(t) \eta (t,x)  =& s(0,x_{+} (0))- r(0,x_{-} (0)) \\
&+  \int_0 ^t  \dfrac{\lambda A(\tau) (s(\tau , x_{-}(\tau))- r(\tau , x_{+}(\tau)))}{2(1+\tau)^{\mu}} d \tau.
\end{align*}
Taking the limit superior,  since $\lim_{x\rightarrow -\infty} x_{\pm} (t) = -\infty$,  we have 
\begin{align*}
A(t)S(t)= & \frac{4}{\gamma-1} {u_-}^{-(\gamma-1)/2} +  \limsup_{x \rightarrow - \infty} \int_0 ^t \dfrac{\lambda A(\tau) (s(\tau , x_{-}(\tau))- r(\tau , x_{+}(\tau)))}{2(1+\tau)^{\mu}} d \tau \\
 =& \frac{4}{\gamma-1} {u_-}^{-(\gamma-1)/2} +  \lim_{R \rightarrow  \infty} \sup_{x\leq -R}   \int_0 ^t \dfrac{\lambda A(\tau) (s(\tau , x_{-}(\tau))- r(\tau , x_{+}(\tau)))}{2(1+\tau)^{\mu}} d \tau \\
\leq&   \frac{4}{\gamma-1} {u_-}^{-(\gamma-1)/2} \\
& +  \lim_{R \rightarrow  \infty}    \int_0 ^t \dfrac{\lambda A(\tau)  \sup \{ r(\tau , x_{+}(\tau))- s(\tau , x_{-}(\tau))\ | \  x\leq -R  \}}{2(1+\tau)^{\mu}} d \tau \\
= &    \frac{4}{\gamma-1} {u_-}^{-(\gamma-1)/2} +     \int_0 ^t \dfrac{\lambda A(\tau) S(\tau)}{2(1+\tau)^{\mu}} d \tau ,
\end{align*}
where the Lebesgue convergence theorem is used in the above forth equality.
Solving the above integral inequality for $S(t)$, we get $S(t) \leq  \frac{4}{\gamma-1} {u_-}^{-(\gamma-1)/2}$.
While, in the same way as the proof of the above estimate for $S(t)$, we have $I(t) \geq \frac{4}{\gamma-1} {u_-}^{-(\gamma-1)/2}$.
Hence we have $S(t) =I(t) =  \frac{4}{\gamma-1} {u_-}^{-(\gamma-1)/2},$ which yields that the convergence of the desired limit and \eqref{infuu}.
\end{proof}

\begin{Remark} \label{plimit}{\bf (Remark on the proof of Lemma \ref{inf0}
).}
In Lemma \ref{inf0}, it would be possible to show the existence of the limit $\lim_{x\rightarrow - \infty} v (t,x)$, if we recall the process of the proofs of the local existence of solutions with $C^1 _b$ initial data
in Friedrichs \cite{KF}, Lax \cite{lax0} and Li and Wu \cite{LW}. However, the existence of this limit is not necessary to get the upper bound of $u$.
The above proof is done without recalling their proofs. 
\end{Remark}

The following Lemma, which can be shown by the inequality \eqref{pes} in Lemma \ref{esP} and Lemma \ref{inf0}, gives a upper bound of $u$.
\begin{Lemma}\label{upes}
 Under the same assumptions as in Lemma \ref{inf0}, we assume that
$$\sup_{(x,y) \in \R^2 }|s(0,x) +r(0,y)| <u_- /2. $$ Then there exists a constant $m_1 >0$ such that
\begin{eqnarray*}
u(t,x) \leq \frac{m_1}{2}
\end{eqnarray*}
for all $(t,x) \in [0,T^*) \times \R$.
\end{Lemma}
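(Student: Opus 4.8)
The plan is to convert the desired upper bound on $u$ into a strictly positive lower bound on $\eta$, using that $\eta = \frac{2}{\gamma-1} u^{-(\gamma-1)/2}$ is a strictly decreasing function of $u$ for $\gamma>1$. Since $s-r = 2\eta$ by the definition \eqref{ri} of the Riemann invariants, bounding $u(t,x)$ from above is equivalent to bounding $s(t,x)-r(t,x)$ from below, and then inverting the monotone map $u \mapsto \eta$. I would also record at the outset the identification $\sup_{(x,y)\in\R^2}|s(0,x)+r(0,y)| = \|\Phi(0)\|_{L^{\infty}(\R^2)}$, which follows from $\Phi(t,a,b)=r(t,a)+s(t,b)$, so that the smallness hypothesis becomes a bound on $\|\Phi(0)\|_{L^{\infty}(\R^2)}$ that is directly compatible with Lemma \ref{esP}.

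The key step is the exact algebraic identity, valid for any auxiliary point $z\in\R$,
\[
s(t,x)-r(t,x) = (s(t,z)-r(t,z)) + \Phi(t,z,x) - \Phi(t,x,z),
\]
which one checks immediately from $\Phi(t,z,x)-\Phi(t,x,z) = (s(t,x)-r(t,x)) - (s(t,z)-r(t,z))$. Rewriting this in terms of $\eta$ and discarding the two $\Phi$-terms by their absolute values gives
\[
2\eta(t,x) \geq 2\eta(t,z) - 2\|\Phi(t)\|_{L^{\infty}(\R^2)} \geq 2\eta(t,z) - 2\|\Phi(0)\|_{L^{\infty}(\R^2)},
\]
where the second inequality is precisely the a priori estimate \eqref{pes} of Lemma \ref{esP}. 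At this point the right-hand side is controlled entirely by initial data except for the term $\eta(t,z)$, and I would remove this dependence by letting $z\to-\infty$: Lemma \ref{inf0} gives $u(t,z)\to u_-$, hence $\eta(t,z)\to \frac{2}{\gamma-1}{u_-}^{-(\gamma-1)/2}$, yielding the uniform lower bound
\[
2\eta(t,x) \geq \frac{4}{\gamma-1}{u_-}^{-(\gamma-1)/2} - 2\|\Phi(0)\|_{L^{\infty}(\R^2)} =: 2\eta_*
\]
for all $t\in[0,T^*)$ and $x\in\R$.

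The smallness hypothesis on $\sup_{(x,y)}|s(0,x)+r(0,y)| = \|\Phi(0)\|_{L^{\infty}(\R^2)}$ is exactly what forces the constant $\eta_*$ to be strictly positive; inverting $\eta = \frac{2}{\gamma-1}u^{-(\gamma-1)/2} \geq \eta_*$ then gives $u(t,x) \leq \left(\tfrac{\gamma-1}{2}\eta_*\right)^{-2/(\gamma-1)}$, and setting $m_1$ equal to twice this quantity completes the argument. I expect the only delicate point to be the passage $z\to-\infty$ inside the lower bound: it requires the bound on $\|\Phi(t)\|_{L^\infty(\R^2)}$ to be uniform in $z$ (supplied by Lemma \ref{esP}) together with the pointwise-in-$t$ limit of $\eta(t,z)$ (supplied by Lemma \ref{inf0}), so that the two $\Phi$-contributions remain harmlessly bounded while $\eta(t,z)$ converges to its limiting value. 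Everything else is routine once the identity above is in hand.
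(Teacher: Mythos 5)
Your proof is correct and takes essentially the same route as the paper's: both combine the a priori estimate \eqref{pes} of Lemma \ref{esP} with the limit $u(t,y)\to u_-$ of Lemma \ref{inf0}, sending one argument to $-\infty$ so that the bound on $\Phi$ turns into a strictly positive lower bound on $\eta(t,x)$ and hence an upper bound on $u(t,x)$. The only difference is a small algebraic refinement: the paper estimates $|\eta(t,x)-\eta(t,y)|\leq |\Phi(t,x,y)|+|v(t,x)|+|v(t,y)|$, controlling $v$ through the diagonal values of $\Phi$, whereas your antisymmetrized combination $\Phi(t,z,x)-\Phi(t,x,z)$ cancels the $v$-terms exactly and thereby saves a factor of $2$ in the smallness of $\|\Phi(0)\|_{L^{\infty}(\R^2)}$ needed to make that lower bound positive (note that, like the paper's own proof, yours implicitly reads the hypothesis as smallness relative to $\tfrac{2}{\gamma-1}u_-^{-(\gamma-1)/2}$ rather than the literal $u_-/2$ in the statement, which appears to be a typo inherited from Theorem \ref{main2}).
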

\begin{proof}
We chose a constant $\tilde{\delta}>0$ satisfying $$\sup_{(x,y) \in \R^2 }|s(0,x) +r(0,y)|< \tilde{\delta}  < \frac{1}{\gamma-1} {u_-}^{-(\gamma-1)/2}.$$
The first inequality in Lemma \ref{esP}  implies that
\begin{eqnarray*}
2|v(t,x)|= |s(t,x) +r(t,x)| \leq  \| \Phi (t,x,y)\|_{L^{\infty} (\R^2)} \leq  \| \Phi (0,x,y)\|_{L^{\infty} (\R^2)}< \tilde{\delta}.
\end{eqnarray*}
While, from the first inequality in Lemma \eqref{esP}, we have
\begin{eqnarray*}
|\eta (t,x) -\eta (t,y)| - \tilde{\delta} \leq |\eta (t,x) -\eta (t,y)| -|v(t,x)|-|v(t,y)| \leq | \Phi (t,x,y) | <  \tilde{\delta}.
\end{eqnarray*}
Namely we have
\begin{eqnarray*}
|\eta (t,x) -\eta (t,y)|\leq 2 \tilde{\delta}.
\end{eqnarray*}
Taking $y \rightarrow -\infty$ in the above inequality, by Lemma \ref{inf0}, we have
\begin{eqnarray*}
|\eta (t,x) -\frac{2}{\gamma-1} {u_-}^{-(\gamma-1)/2} | \leq 2 \tilde{\delta},
\end{eqnarray*}
 which implies the desired estimate, since $2 \tilde{\delta} <\frac{2}{\gamma-1} {u_-}^{-(\gamma-1)/2}.$
\end{proof}

Next, in order to show Theorem \ref{main1}, we prepare two lemmas whose idea comes from Cheng,  Pan and  Zhu \cite{GPZ}.
\begin{Lemma} \label{tdlb} 
Let $1< \gamma <3 $, $\mu\geq 1$ and $0 \leq \lambda \leq 2$. For $C^1$ solution of \eqref{de},  it holds a priory  that
\begin{eqnarray} \label{tblb1} 
y(t,x) \leq  \tilde{K} \ \mbox{and} \ 
q(t,x)  \leq \tilde{K},
\end{eqnarray}
where the constant $\tilde{K}>0$ depends on $\| u_0 \|_{C_b ^1}$ and $\| v_0 \|_{C_b ^1}$.
\end{Lemma}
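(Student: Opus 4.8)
The plan is to extract the upper bound directly from the integral identity \eqref{eq1}, by checking that under the hypotheses $1<\gamma<3$, $\mu\ge 1$, $0\le\lambda\le 2$ every term on its right-hand side other than the two initial-data contributions $y(0,x_-(0))$ and $\tfrac{\lambda}{2}\theta_\gamma(0,x_-(0))$ is nonpositive. The analogous identity for $q$ along the plus characteristic (obtained from the second line of \eqref{rs} and the analogue of \eqref{s-1}, namely $\sqrt{c}\,r_x(t,x_+(t))=\tfrac{d}{dt}\theta_\gamma(u(t,x_+(t)))$) is treated in exactly the same way, so it suffices to handle $y$.

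I would fix the three signs as follows. The last term of \eqref{eq1}, $-\int_0^t A(\tau)^{-1}\tfrac{\gamma+1}{4}u^{(\gamma-3)/4}y^2\,d\tau$, is nonpositive because $\gamma+1>0$, $u>0$ and $A>0$, so it can only help. The boundary term $-\tfrac{\lambda}{2}A(t)\theta_\gamma(t,x_-(t))/(1+t)^\mu$ is nonpositive because $\theta_\gamma=\tfrac{4}{3-\gamma}u^{(3-\gamma)/4}>0$ for $\gamma<3$; this is the one place where the restriction $\gamma<3$ is genuinely used. The decisive term is the remaining integral, whose integrand carries the factor $\tfrac{\mu}{1+\tau}-\tfrac{\lambda}{2(1+\tau)^\mu}$ multiplied against $\theta_\gamma>0$.

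The heart of the matter is to show this factor is nonnegative for all $\tau\ge 0$, which holds exactly under $\mu\ge 1$ and $0\le\lambda\le 2$. I would rewrite the inequality $\tfrac{\mu}{1+\tau}\ge\tfrac{\lambda}{2(1+\tau)^\mu}$ as $\mu(1+\tau)^{\mu-1}\ge\lambda/2$ and then use $(1+\tau)^{\mu-1}\ge 1$ (valid since $\mu\ge1$) to obtain $\mu(1+\tau)^{\mu-1}\ge\mu\ge1\ge\lambda/2$; equivalently, $A(\tau)/(1+\tau)^\mu$ is nonincreasing, which is precisely the monotonicity the integration by parts leading to \eqref{y-int} exploits. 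Hence this integral is nonpositive as well, and \eqref{eq1} collapses to $y(t,x)\le y(0,x_-(0))+\tfrac{\lambda}{2}\theta_\gamma(u_0(x_-(0)))$.

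To finish I would bound the right-hand side by the data alone: $y(0,\cdot)=\sqrt{c_0}\,r_x(0,\cdot)$ is controlled using $u_0\ge\delta_0$ (so $c_0=u_0^{-(\gamma+1)/2}$ is bounded above) together with $\|u_0'\|_{L^\infty},\|v_0'\|_{L^\infty}<\infty$, while $\theta_\gamma(u_0)\le\tfrac{4}{3-\gamma}\|u_0\|_{L^\infty}^{(3-\gamma)/4}$. Taking the supremum over the characteristic foot gives $y\le\tilde K$, and the mirror argument yields $q\le\tilde K$, with $\tilde K$ depending only on $\|u_0\|_{C_b^1}$, $\|v_0\|_{C_b^1}$ (and $\gamma,\lambda$). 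The main obstacle is exactly the sign analysis of the kernel $\tfrac{\mu}{1+\tau}-\tfrac{\lambda}{2(1+\tau)^\mu}$, since it is here, and only here, that the critical thresholds $\mu=1$ and $\lambda=2$ enter. I would stress that no a priori upper bound on $u$ is required: the dangerous quantity $\theta_\gamma(t,x_-(t))$ appears only with a favorable sign and is simply discarded, which is why $\tilde K$ can be taken to depend on the initial data alone, and why this lemma can afterwards supply the missing upper bound on $u$ in Theorem \ref{main1} through the relation $\tfrac{d}{dt}\theta_\gamma=q/A$.
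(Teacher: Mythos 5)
Your proposal is correct and follows essentially the same route as the paper: both read the bound directly off the identity \eqref{eq1}, observing that the quadratic term, the boundary term $-\tfrac{\lambda}{2}A(t)\theta_{\gamma}(t,x_{-}(t))/(1+t)^{\mu}$, and the integral carrying the kernel $\tfrac{\mu}{1+\tau}-\tfrac{\lambda}{2(1+\tau)^{\mu}}$ are all nonpositive when $1<\gamma<3$, $\mu\geq 1$, $0\leq\lambda\leq 2$, leaving only the initial-data terms. You in fact supply details the paper merely asserts (the verification $\mu(1+\tau)^{\mu-1}\geq 1\geq\lambda/2$ and the explicit bound on $y(0,\cdot)$ and $\theta_{\gamma}(u_0)$); the only quibble is that $\gamma<3$ is also needed for the sign of $\theta_{\gamma}$ inside the kernel integral, not solely in the boundary term.
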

\begin{proof}
We note that the second term of the right hand side in \eqref{eq1} 
$$
-\int_0 ^t  \dfrac{A(\tau)}{(1+\tau)^{\mu}} \left( \dfrac{\mu}{(1+\tau)} - \dfrac{\lambda }{2(1+\tau)^{\mu}} \right) \lambda \theta_{\gamma} (\tau,x_{-} (\tau))d\tau
$$
is negative, when  $1< \gamma <3 $, $\mu\geq 1$ and $0 \leq \lambda \leq 2$.
Since the second, the third and the fifth terms of the right hand side in \eqref{eq1} are negative, we have  \eqref{tblb1}.

\end{proof}

\begin{Lemma} \label{upb} 
Let $1< \gamma <3 $ and $\mu\geq 1$ and $0 \leq \lambda \leq 2$. For $C^1$ solution of \eqref{de}, it holds that
\begin{eqnarray}\label{upbes}
u^{\frac{3-\gamma}{4}} (t,x)\leq 
 \left\{ \begin{array}{ll}
 \tilde{K}(1+t)^{1-\frac{\lambda}{2}},\ \ \mbox{if} \ \mu=1 \ \mbox{and}  \ 0 \leq \lambda < 2,\\
\tilde{K} \log (e+t), \ \ \mbox{if} \  \mbox{and} \ \mu=1 \ \mbox{and} \  \lambda = 2,\\
\tilde{K} (1+t), \ \ \mbox{if} \  \mu>1 \ \mbox{and} \  \lambda \geq 0\\
\end{array}
\right. 
\end{eqnarray}
for all $(t,x)$, where $\tilde{K}>0$ is a positive constant depending on  $\| u_0 \|_{C_b ^1}$ and $\| v_0 \|_{C_b ^1}$.
\end{Lemma}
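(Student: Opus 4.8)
The plan is to integrate the pointwise identity \eqref{s-1} along a minus characteristic curve and then insert the one-sided bound of Lemma \ref{tdlb}, reducing the whole statement to an explicit estimate of $\int_0^t A(\tau)^{-1}\,d\tau$. Fix an arbitrary point $(t,x)$ and let $x_{-}(\cdot)$ be the minus characteristic through it, so $x_{-}(t)=x$. Since $q=A(t)\sqrt{c}\,s_x$, identity \eqref{s-1} can be rewritten as
\[
\frac{d}{d\tau}\theta_{\gamma}\big(u(\tau,x_{-}(\tau))\big)=\sqrt{c}\,s_x(\tau,x_{-}(\tau))=\frac{q(\tau,x_{-}(\tau))}{A(\tau)}.
\]
Integrating this over $[0,t]$ gives the exact formula
\[
\theta_{\gamma}\big(u(t,x)\big)=\theta_{\gamma}\big(u_0(x_{-}(0))\big)+\int_0^t \frac{q(\tau,x_{-}(\tau))}{A(\tau)}\,d\tau .
\]

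Next I would bound the two terms on the right. Because $(u_0,v_0)\in X_{m,\delta}$, the function $u_0$ is bounded above, so $\theta_{\gamma}(u_0(x_{-}(0)))\leq \|\theta_{\gamma}(u_0)\|_{L^{\infty}}$, a constant controlled by $\|u_0\|_{C_b^1}$. For the integral, Lemma \ref{tdlb} supplies the upper bound $q\leq \tilde{K}$, valid precisely in the regime $1<\gamma<3$, $\mu\geq 1$, $0\leq\lambda\leq 2$ under consideration; together with $A(\tau)>0$ this yields
\[
\theta_{\gamma}\big(u(t,x)\big)\leq \|\theta_{\gamma}(u_0)\|_{L^{\infty}}+\tilde{K}\int_0^t \frac{d\tau}{A(\tau)} .
\]
Since $1<\gamma<3$ we have $\theta_{\gamma}(u)=\tfrac{4}{3-\gamma}u^{(3-\gamma)/4}$ with $(3-\gamma)/4>0$, so $\theta_{\gamma}$ is increasing in $u$ and this is exactly an upper bound for $u^{(3-\gamma)/4}$, up to the fixed positive factor $\tfrac{4}{3-\gamma}$.

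The remaining task is to evaluate $\int_0^t A(\tau)^{-1}\,d\tau$ in the three regimes, recalling $A(\tau)=\exp\big(\int_0^\tau \tfrac{\lambda}{2(1+\sigma)^{\mu}}\,d\sigma\big)\geq 1$. For $\mu=1$ one has $A(\tau)=(1+\tau)^{\lambda/2}$, so when $0\leq\lambda<2$ the integral equals $\tfrac{(1+t)^{1-\lambda/2}-1}{1-\lambda/2}$, which is bounded by a constant times $(1+t)^{1-\lambda/2}$; when $\lambda=2$ it equals $\log(1+t)$, bounded by a constant times $\log(e+t)$. For $\mu>1$ the simple bound $A(\tau)\geq 1$ gives $\int_0^t A(\tau)^{-1}\,d\tau\leq t\leq 1+t$. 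Absorbing the additive constant $\|\theta_{\gamma}(u_0)\|_{L^{\infty}}$ and all multiplicative constants into a single $\tilde{K}$ (legitimate because each right-hand expression is bounded below by a positive constant on $t\geq 0$) produces exactly \eqref{upbes}.

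I do not expect a genuine obstacle: once \eqref{s-1} and Lemma \ref{tdlb} are in hand, the lemma is essentially bookkeeping. The only points requiring care are (i) verifying that the monotonicity of $\theta_{\gamma}$, which relies on $3-\gamma>0$, lets the upper bound on $\theta_{\gamma}$ transfer to $u^{(3-\gamma)/4}$ with the correct orientation; and (ii) matching the three growth rates of $\int_0^t A(\tau)^{-1}\,d\tau$ to the stated right-hand sides, in particular writing $\log(e+t)$ rather than $\log(1+t)$ in the critical case $\mu=1,\lambda=2$ so that the bound stays bounded below away from zero at $t=0$ and the constant can be absorbed.
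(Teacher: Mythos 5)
Your argument is correct, and it reaches the paper's conclusion by a genuinely different (and slightly more economical) route. The paper does not integrate along characteristics here: it works at fixed $x$, using \emph{both} one-sided bounds of Lemma \ref{tdlb}. From \eqref{ww} it writes $r_t = c\,r_x - \tfrac{\lambda}{2}(1+t)^{-\mu}(r+s)$ and $s_t = -c\,s_x - \tfrac{\lambda}{2}(1+t)^{-\mu}(r+s)$, bounds $c\,r_x$ and $c\,s_x$ from above by $\tilde{K}A(t)^{-1}\sqrt{c}$ via $y\le\tilde{K}$ and $q\le\tilde{K}$, and then subtracts, so that the damping terms cancel and $2c\,u_t=(r-s)_t\le \tilde{K}A(t)^{-1}\sqrt{c}$; dividing by $\sqrt{c}$ and integrating in $t$ at fixed $x$ yields $\theta_{\gamma}(u(t,x))\le\theta_{\gamma}(u_0(x))+\tilde{K}\int_0^t A(\tau)^{-1}\,d\tau$, followed by the same evaluation of $\int_0^t A(\tau)^{-1}\,d\tau$ that you perform. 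Your version instead integrates the exact identity \eqref{s-1} along the minus characteristic through $(t,x)$, which consumes only the single bound $q\le\tilde{K}$ and avoids the cancellation step entirely. Both proofs rest on the same two pillars, Lemma \ref{tdlb} and the explicit growth of $\int_0^t A(\tau)^{-1}\,d\tau$ in the three regimes, so the difference lies in which family of curves carries the integration (minus characteristics for you, vertical lines for the paper) and in how much of Lemma \ref{tdlb} is needed. Your closing bookkeeping is also handled correctly: the monotonicity of $\theta_{\gamma}$ for $1<\gamma<3$ transfers the bound to $u^{(3-\gamma)/4}$ with the right orientation, and the additive constant $\|\theta_{\gamma}(u_0)\|_{L^{\infty}}$ can indeed be absorbed because each right-hand side of \eqref{upbes} is bounded below by $1$ for $t\ge 0$ (this is precisely why $\log(e+t)$, not $\log(1+t)$, must appear in the critical case).
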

\begin{proof}
From \eqref{ww},  Lemma \ref{tdlb} implies that
\begin{eqnarray*}
r_t  = c r_x  - \dfrac{\lambda(r+s)}{1+t}  \leq  \tilde{K}  A(t)^{-1} \sqrt{c}-  \dfrac{\lambda(r+s)}{2(1+t)^{\mu}}.
\end{eqnarray*}
Similarly we have 
\begin{eqnarray*}
s_t  = -c s_x  - \dfrac{\lambda(r+s)}{1+t}  \geq   - \tilde{K}  A(t)^{-1} \sqrt{c}-  \dfrac{\lambda(r+s)}{2(1+t)^{\mu}}.
\end{eqnarray*}
Hence we have
\begin{eqnarray*}
-2\eta_t= 2 c {u_t} =(r-s)_t \leq  \tilde{K}  A(t)^{-1} \sqrt{c}.
\end{eqnarray*}
Dividing the both side of the above inequality by $\sqrt{c}$ and integrating with $t$ on $[0,T]$, we have
\begin{eqnarray*}
 u^{\frac{3-\gamma}{4}} \leq \tilde{K} \int_0 ^t A(\tau)^{-1} d\tau.
\end{eqnarray*}
Since $A(t)^{-1} =(1+t)^{-\lambda/2}$ when $\mu=1$ and $A(t)^{-1}$ is bounded  when $\mu>1$, we have \eqref{upbes}.
\end{proof}

\subsection{Proof of Theorems \ref{main2}}
The first half in Theorem \ref{main2} can be shown by  \eqref{lb} and Lemma \ref{upes}.
Now we prove the second half in Theorem \ref{main2} in the case that $r_x (0,x_0 ) \leq -K_{m ,\delta}$ only.
The other case can be proved in the same way. 
We take the minus characteristic curve through $(0,x_0).$
By \eqref{lb} and Lemma \ref{upes}, we have the uniform boundedness of $\theta_{\gamma} (u) $.
We note that the second term is $0$ if $\mu=1$ and $\lambda=2$ in \eqref{eq1} and that the term is bounded if $\mu >1$ or $\mu=1$ and $0 \leq \lambda<2$.
Therefore, we have from  \eqref{eq1} that
\begin{eqnarray}
-y(t,x_{-}(t))\geq -y(0,x_{-} (0))-C+ K \int_{0} ^t A(\tau)^{-1} (-y(\tau,x_{-}(\tau))^2 d\tau,  \label{bl-y}
\end{eqnarray}
where $C$ and $K$ are positive constants depending on $m$ and $\delta$.
Since the condition that $r(0,x_0 ) \leq -K_{m ,\delta}$ is satisfied for some large constant $K_{m ,\delta}$, $-y(0,x_{-} (0))-C$
is positive, which leads the finite time blow of $-y(t,x_{-} (t))$. Here we check it. 
We  put $$F(t) =  -y(0,x_{-} (0))-C+ K \int_{0} ^t A(\tau)^{-1}(-y(\tau,x_{-}(\tau))^2 d\tau .$$
From the  differential inequality \eqref{bl-y}, we have
\begin{eqnarray*}
F'(t)\geq K  A(t)^{-1} F^2 (t). 
\end{eqnarray*}
Solving this differential inequality on $F$, we have
\begin{eqnarray*}
F(t )\geq \left( \dfrac{1}{F(0)} - K\int_{0} ^t  A(\tau)^{-1} d\tau \right).
\end{eqnarray*}
Since  $ \lim_{t\rightarrow \infty} \int_{0} ^t A(\tau)^{-1}d\tau =\infty$ for $\mu >1$ and $\lambda \geq 0$ or $\mu= 1$ and  $0 \leq \lambda \leq 2$, from the positivity of $F(0)$, $F(t)$ blows up in finite time.
Hence $T^*$ is finite.

\subsection{Proof of Theorems \ref{main1}}

We put \begin{eqnarray*}
\tilde{A}(t) = 
 \left\{ \begin{array}{ll}
 (1+t),\ \ \mbox{if} \ \mu=1 \ \mbox{and}  \ 0 \leq \lambda < 2 \ \mbox{or} \  \mu>1 \ \mbox{and} \  \lambda >0,\\
 (1+t)\log (e+t), \ \ \mbox{if} \  \mbox{and} \ \mu=1 \ \mbox{and} \  \lambda = 2.
\end{array}
\right. 
\end{eqnarray*}
Since the second and third terms in the right hand side are negative when $1<\gamma <3$, $\mu \geq 1$ and $ 0\leq \lambda \leq 2$ in  \eqref{eq1},
using  Lemma \ref{upb}, we have 
\begin{eqnarray*}
-y(t,x_{-}(t))\geq -y(0,x_0 )-\dfrac{\lambda}{2}\theta_{\gamma} (u_0 (x_0)) + \tilde{K} \int_{0} ^t \tilde{A}(\tau)^{-1}  (-y(\tau,x_{-}(\tau))^2 d\tau,
\end{eqnarray*}
where  $\tilde{K}$ is a positive constant depending on  $\| u_0 \|_{C^1 _b}$ and $\| v_0 \|_{C^1 _b}$.
If \eqref{blocon2} is satisfied, we have by the definition of $y$ that the right hand side of the above differential inequality is positive at $t=0$.
Hence, since $ \lim_{t\rightarrow \infty} \int_{0} ^t \tilde{A}(\tau)^{-1}d\tau =\infty$ under the assumptions on $\lambda$ and $\mu$ in Theorem \ref{main1}, 
$-y(t,x_{-}(t))$ diverge in finite time, if \eqref{blocon2} is satisfied.
From \eqref{lb} and  Lemma \ref{upb}, we have $(u(t), v(t)) \in X_{m_1, \delta_1}$
for all $t \in [0,T^*)$.

\subsection{Upper and lower estimates of the lifespan of solutions of \eqref{de}}

Here we consider the Cauchy problem \eqref{de} with small initial data as
\begin{eqnarray} \label{small}
(u_0, v_0)=(1+\varepsilon \varphi ,\varepsilon \psi ),
\end{eqnarray}
where $(\psi, \varphi ) \in C^1 _b (\R)$  and $\varepsilon >0 $ is a small parameter.
The aim of this subsection is to give sharp upper and lower estimates of 
the lifespan $T^* $ of solutions for small $\varepsilon >0$.
For initial data, we assume that for $(\psi_-, \varphi_- )\in \R^2$
\begin{eqnarray} \label{limpp}
\lim_{x\rightarrow -\infty} (\varphi  (x), \psi (x)) =( \varphi_-,  \psi_-).
\end{eqnarray}
We note that the assumption \eqref{nosin} is satisfied for fixed $\psi \in C^1 _b (\R)$, if $\varepsilon $ is sufficiently small.
We give a upper estimate of the lifespan for $\mu\geq 1$.
\begin{Thm}\label{main3}
Let  $\gamma >1$ and $( \varphi,  \psi) \in C^1 _b (\R)$.  Suppose that \eqref{limpp} is satisfied and that
\begin{eqnarray} \label{KK}
\psi_x (x_0) \leq -K 
\end{eqnarray}
for a positive constant $K=K(\| \varphi  \|_{L^{\infty} (\R)}, \| \psi   \|_{L^{\infty} (\R)})$ and $x_0 \in \R$.
 Then there exists a number $\varepsilon _0 >0$ such that
\begin{eqnarray} \label{ul}
T^* \leq  \left\{ \begin{array}{ll}
C\varepsilon^{-1}  \ \ \mbox{for} \  \mu>1 \ \mbox{and} \ \lambda\geq 0,\\
C\varepsilon^{-\frac{2}{2-\lambda}}  \ \ \mbox{for} \ \mu=1  \ \mbox{and} \  0\leq \lambda < 2,\\
e^{\frac{C}{\varepsilon }}\ \ \mbox{for} \ \mu=1 \ \mbox{and} \ \lambda =2 .  \\
\end{array}
\right. 
\end{eqnarray}
for $0<\varepsilon \leq  \varepsilon _0$
and for some constants $m >0$ and $\delta>0$, it holds that $(u(t), v(t)) \in  X_{m, \delta}$ with all $t \in [0,T^*)$.
\end{Thm}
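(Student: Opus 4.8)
The plan is to follow the proof of Theorem~\ref{main2} but to keep careful track of the powers of $\varepsilon$, reading off the lifespan from the growth rate of $\int_0^T A(\tau)^{-1}\,d\tau$. First I would establish persistence of the bound $(u,v)\in X_{m,\delta}$. Writing the data \eqref{small} in Riemann invariants, $\eta(0,\cdot)=\frac{2}{\gamma-1}(1+\varepsilon\varphi)^{-(\gamma-1)/2}$, $r(0,\cdot)=\varepsilon\psi-\eta(0,\cdot)$, $s(0,\cdot)=\varepsilon\psi+\eta(0,\cdot)$, so $\sup_{x,y}|r(0,x)+s(0,y)|=O(\varepsilon)$; hence for $\varepsilon$ small the smallness hypothesis of Lemma~\ref{upes} holds with $u_-=1+\varepsilon\varphi_-$, and \eqref{lb}, Lemma~\ref{inf0} and Lemma~\ref{upes} give $(u,v)\in X_{m,\delta}$ on $[0,T^*)$. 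Moreover the proof of Lemma~\ref{upes} keeps $\eta$ within $O(\varepsilon)$ of $\frac{2}{\gamma-1}$, so that $u$, and with it $u^{(\gamma-3)/4}$ and $\theta_\gamma(u)$, all stay within $O(\varepsilon)$ of their values at $u=1$, uniformly on $[0,T^*)$. In particular the quadratic coefficient $K_0:=\frac{\gamma+1}{4}\inf u^{(\gamma-3)/4}$ in \eqref{rs} is a positive constant independent of small $\varepsilon$, and $|\theta_\gamma(u)-\theta_\gamma(1)|\le C\varepsilon$ throughout.

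Next I would extract a blow-up seed of size $\varepsilon K$. Using $\eta_x=-c\,u_x$ one gets $r_x(0,x_0)=\varepsilon(\psi_x+c_0\varphi_x)(x_0)$ and $s_x(0,x_0)=\varepsilon(\psi_x-c_0\varphi_x)(x_0)$, whose sum is $2\varepsilon\psi_x(x_0)\le-2\varepsilon K$ by \eqref{KK}; thus at least one of them is $\le-\varepsilon K$. Taking, say, $r_x(0,x_0)\le-\varepsilon K$ (the case of $s_x$ is symmetric, with $y,x_-$ replaced by $q,x_+$), the minus characteristic through $(0,x_0)$ carries $w:=-y(\cdot,x_-(\cdot))$ with $w(0)=-\sqrt{c_0(x_0)}\,r_x(0,x_0)\ge c_*\varepsilon K$ for some $c_*>0$. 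Running the derivation of \eqref{bl-y} from \eqref{eq1}, and using that $u^{(\gamma-3)/4}$ is bounded below, $w$ obeys
\[
w(t)\ \ge\ w(0)-C_\varepsilon(t)+K_0\int_0^t A(\tau)^{-1}w(\tau)^2\,d\tau,
\]
where $C_\varepsilon(t)$ is the contribution of the damping coupling.

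The heart of the matter, and the step I expect to be the main obstacle, is to show that $C_\varepsilon(t)$ is $O(\varepsilon)$ uniformly in $t$, with an implied constant independent of $K$. By \eqref{s-1} and the integration by parts producing \eqref{eq1}, $C_\varepsilon(t)=\int_0^t g(\tau)\,\frac{d}{d\tau}\theta_\gamma(u(\tau,x_-(\tau)))\,d\tau$ with weight $g(\tau)=\frac{\lambda A(\tau)}{2(1+\tau)^\mu}$. The point is that the $O(1)$ part of $\theta_\gamma$ contributes nothing: writing $\theta_\gamma=\theta_\gamma(1)+\beta$ with $|\beta|\le C\varepsilon$ from the first step and integrating by parts, $C_\varepsilon(t)=g(t)\beta(t)-g(0)\beta(0)-\int_0^t g'(\tau)\beta(\tau)\,d\tau$, so that
\[
|C_\varepsilon(t)|\ \le\ C\varepsilon\bigl(2\sup_\tau g(\tau)+\mathrm{TV}(g)\bigr)\ \le\ \bar C\varepsilon ,
\]
because in each regime $g$ is bounded with bounded total variation (indeed for $\mu=1,\lambda=2$ one has $g\equiv 1$ and $C_\varepsilon=\frac{\lambda}{2}(\theta_\gamma(t)-\theta_\gamma(0))$ directly). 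Choosing the constant $K$ in \eqref{KK} large enough that $c_* K\ge 2\bar C$ then forces $w(0)-C_\varepsilon(t)\ge\frac12 c_*\varepsilon K>0$, so $F(t):=w(0)-C_\varepsilon(t)+K_0\int_0^t A^{-1}w^2$ satisfies $F'\ge K_0 A^{-1}F^2$ and $w\ge F$ blows up; this is exactly where the largeness of $K$ enters.

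Finally, integrating $F'\ge K_0 A^{-1}F^2$ shows blow-up at a time $T_1$ with $\int_0^{T_1}A(\tau)^{-1}\,d\tau=\frac{1}{K_0 F(0)}\le\frac{2}{K_0 c_*\varepsilon K}$, whence $T^*\le T_1$. Inserting $A$ yields the three rates of \eqref{ul}: for $\mu>1$, $A$ is bounded, so $\int_0^T A^{-1}\gtrsim T$ and $T^*\lesssim\varepsilon^{-1}$; for $\mu=1,\ 0\le\lambda<2$, $A(t)=(1+t)^{\lambda/2}$ gives $\int_0^T A^{-1}\sim T^{1-\lambda/2}$ and hence $T^*\lesssim\varepsilon^{-2/(2-\lambda)}$; for $\mu=1,\lambda=2$, $A(t)=1+t$ gives $\int_0^T A^{-1}=\log(1+T)$ and hence $T^*\lesssim e^{C/\varepsilon}$. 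The membership $(u,v)\in X_{m,\delta}$ on $[0,T^*)$ is already furnished by the first step, so only the uniform-in-time $O(\varepsilon)$ control of $C_\varepsilon$ requires genuine care; the rest is bookkeeping of $\varepsilon$-powers in the proof of Theorem~\ref{main2}.
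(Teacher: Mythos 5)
Your proposal is correct and follows essentially the same route as the paper: persistence of $(u,v)\in X_{m,\delta}$ via Lemmas \ref{esP}, \ref{inf0} and \ref{upes} giving $|u-1|\le C^*\varepsilon$, then the subtraction trick — the paper uses $\sqrt{c}\,s_x(t,x_-(t))=\frac{d}{dt}\bigl(\theta_\gamma(u(t,x_-(t)))-\theta_\gamma(1)\bigr)$ in place of \eqref{s-1} — to make the damping contribution $O(\varepsilon)$ uniformly in $t$, followed by the Riccati inequality \eqref{bl-smy}--\eqref{bl-smq} whose blow-up time is read off from $\int_0^T A(\tau)^{-1}d\tau$. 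Your explicit total-variation bound on $g(\tau)=\frac{\lambda A(\tau)}{2(1+\tau)^\mu}$ is just a more detailed justification of the step the paper carries out implicitly, and your extraction of the seed from $r_x(0,x_0)+s_x(0,x_0)=2\varepsilon\psi_x(x_0)$ matches the paper's use of \eqref{KK}.
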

The following theorem give a lower estimate of the lifespan for  $\mu \geq 0$ and $\lambda \geq 0$.
\begin{Thm}\label{main4}
Let   $\gamma >1$ and $( \varphi,  \psi) \in C^1 _b (\R)$.  Suppose that \eqref{limpp} is satisfied. There exists a number $\varepsilon _0 >0$ such that if $0< \varepsilon \leq \varepsilon_0 $, then
\begin{eqnarray} \label{ll} \label{lbt}
T^* \geq   \left\{ \begin{array}{ll}
C\varepsilon^{-1}  \ \ \mbox{for} \  \mu>1 \ \mbox{and} \ \lambda\geq 0,\\
C\varepsilon^{-\frac{2}{2-\lambda}}  \ \ \mbox{for} \ \mu=1  \ \mbox{and} \  0 \leq \lambda < 2,\\
e^{\frac{C}{\varepsilon }}\ \ \mbox{for} \ \mu=1 \ \mbox{and} \ \lambda =2 ,  \\
\infty \ \ \mbox{for} \ 0\leq \mu< 1 \ \mbox{and} \ \lambda >0  \ \mbox{or} \ \mu=1 \ \mbox{and} \ \lambda >2. 
\end{array}
\right. 
\end{eqnarray}
\end{Thm}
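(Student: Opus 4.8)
The plan is to read Theorem \ref{main4} as a \emph{no-blow-up} statement: for $\varepsilon$ small I will show that, on the time interval $[0,T_\varepsilon)$ with $T_\varepsilon$ the right-hand side of \eqref{lbt}, none of the three blow-up criteria is triggered, so that $T^*\ge T_\varepsilon$. First I would dispose of the $L^\infty$- and $p'$-blow-up. For $(u_0,v_0)=(1+\varepsilon\varphi,\varepsilon\psi)$ one has $\sup_{(x,y)\in\R^2}|s(0,x)+r(0,y)|=O(\varepsilon)$, since the two copies of $\eta$ nearly cancel; hence for $\varepsilon$ small the hypotheses of Lemmas \ref{esP}, \ref{inf0} and \ref{upes} hold, and these yield uniformly on $[0,T^*)$ the bounds $\delta_1\le u(t,x)\le m_1/2$ and $\|v(t)\|_{L^\infty}\le m_1/2$. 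Consequently $p'(u)$ stays bounded and the coefficient $\tfrac{\gamma+1}{4}u^{(\gamma-3)/4}$ in \eqref{rs} lies between two positive constants $B_1,B_2$ independent of $\varepsilon$. Thus $T^*$ is governed solely by the derivative blow-up, i.e.\ by divergence of $y=A(t)\sqrt{c}\,r_x$ or $q=A(t)\sqrt{c}\,s_x$; since the self-interaction in \eqref{rs} has sign $-A(t)^{-1}\tfrac{\gamma+1}{4}u^{(\gamma-3)/4}y^2\le 0$, the only possibility is $y\to-\infty$ or $q\to-\infty$.

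Next I would run a continuity argument on the coupled system \eqref{rs}, keeping it in the integral form \eqref{eq1} \emph{without} performing the integration by parts that produced \eqref{y-int}: along the minus characteristic through $(t,x)$,
\[
-y(t,x)=-y(0,x_-(0))+\int_0^t A(\tau)^{-1}\tfrac{\gamma+1}{4}u^{(\gamma-3)/4}y^2\,d\tau+\int_0^t\frac{\lambda\,q(\tau,x_-(\tau))}{2(1+\tau)^\mu}\,d\tau,
\]
and symmetrically for $-q$ on plus characteristics. The initial size is $O(\varepsilon)$ because $r_x(0),s_x(0)=\varepsilon(\psi_x\pm c_0\varphi_x)$. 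I would bootstrap the quantity $N(t)=\sup_{\tau\le t}\sup_x\big(|y(\tau,x)|+|q(\tau,x)|\big)$ against the ansatz $N(\tau)\le 2C\varepsilon A(\tau)$: using the identity $\int_0^\tau\frac{\lambda A}{2(1+\sigma)^\mu}\,d\sigma=A(\tau)-1$, the coupling integral is then $O(\varepsilon A(t))$, the \emph{same} order as $N$ itself, while the Riccati integral is $\le B_2\int_0^t A^{-1}N^2\,d\tau=O(\varepsilon^2)\int_0^t A\,d\tau$. Feeding these back improves the bootstrap constant (from $2C$ to $\tfrac32 C$) as long as $\varepsilon\int_0^t A(\tau)^{-1}\,d\tau$ stays below a fixed threshold; equivalently, after the rescaling $P=A^{-1}y,\ Q=A^{-1}q$, the decisive scalar comparison is a weighted Riccati inequality whose first singular time is characterized by $B_2\,\varepsilon\int_0^t A(\tau)^{-1}\,d\tau\sim 1$.

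It then remains to evaluate $\int_0^t A(\tau)^{-1}\,d\tau$ in each regime, where $A(t)^{-1}=\exp\!\big(-\int_0^t\tfrac{\lambda}{2(1+\tau)^\mu}\,d\tau\big)$. A direct computation gives $\int_0^t A^{-1}\sim t$ for $\mu>1$, $\int_0^t A^{-1}\sim(1+t)^{1-\lambda/2}$ for $\mu=1,\ 0\le\lambda<2$, and $\int_0^t A^{-1}=\log(1+t)$ for $\mu=1,\ \lambda=2$. Setting each of these $\sim 1/\varepsilon$ produces $T_\varepsilon\sim\varepsilon^{-1}$, $\varepsilon^{-2/(2-\lambda)}$ and $e^{C/\varepsilon}$ respectively, matching \eqref{lbt}. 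In the remaining range ($0\le\mu<1,\ \lambda>0$ or $\mu=1,\ \lambda>2$) one has $\int_0^\infty A(\tau)^{-1}\,d\tau<\infty$, so for $\varepsilon$ small the threshold is never reached, the continuity argument closes for all time, and $T^*=\infty$.

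The hard part will be the cross-coupling term $\frac{\lambda q}{2(1+t)^\mu}$ (and its mirror). A crude bound $|q|\le N$ discards the damping encoded in the system and yields only the universal estimate $T^*\gtrsim\varepsilon^{-1}$; to reach the sharper rates one must retain the factor $A$, i.e.\ propagate a priori that $q=O(\varepsilon A)$ so that the coupling contributes a forcing of the same order $O(\varepsilon A)$ as the solution rather than $O(1)$. This is precisely why I would avoid the integration-by-parts identity \eqref{y-int}: it splits this $O(\varepsilon A)$ quantity into individually $O(1)$ boundary pieces (through the terms $\tfrac{\lambda}{2}\theta_\gamma$), which are harmless for the blow-up direction of Theorem \ref{main2} (whose data are not small) but would destroy the small-data lifespan estimate here. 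Closing the bootstrap therefore hinges on propagating the $O(\varepsilon A)$ control of $y$ and $q$ \emph{simultaneously}; once that is in place, the rest reduces to the scalar Riccati comparison already used in the proof of Theorem \ref{main2}.
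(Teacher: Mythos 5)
Your opening step (using $(u_0,v_0)=(1+\varepsilon\varphi,\varepsilon\psi)$ to get $\sup_{(x,y)\in\R^2}|s(0,x)+r(0,y)|=O(\varepsilon)$ and then Lemmas \ref{esP}, \ref{inf0}, \ref{upes} for the uniform bounds on $u$, $v$, $p'$) is exactly the paper's reduction, and it correctly leaves only the derivative blow-up in play. The gap is in the bootstrap that follows. With the ansatz $N(\tau)\le 2C\varepsilon A(\tau)$ the coupling integral is bounded by $2C\varepsilon\int_0^t\tfrac{\lambda A(\tau)}{2(1+\tau)^{\mu}}\,d\tau=2C\varepsilon\bigl(A(t)-1\bigr)$; together with the $O(\varepsilon)$ initial data this already saturates $2C\varepsilon A(t)$, so the feedback cannot improve the constant from $2C$ to $\tfrac32C$ as you claim --- at best it closes non-strictly, with the whole margin consumed, and the closure then lasts only while the Riccati contribution stays below $C\varepsilon$. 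By your own computation that contribution is $O(\varepsilon^2)\int_0^t A(\tau)\,d\tau$, so the breakdown threshold of your scheme is $\varepsilon\int_0^t A(\tau)\,d\tau\sim 1$, \emph{not} $\varepsilon\int_0^t A(\tau)^{-1}\,d\tau\sim 1$ as you assert; these two statements in your proposal contradict each other. Quantitatively, for $\mu=1$, $0<\lambda<2$ one has $\int_0^t A\sim t^{1+\lambda/2}$, so you only reach $T^*\gtrsim\varepsilon^{-2/(2+\lambda)}$, short of the claimed $\varepsilon^{-2/(2-\lambda)}$; for $\mu=1$, $\lambda=2$ you get a power of $\varepsilon^{-1}$ instead of $e^{C/\varepsilon}$; and for $0\le\mu<1$, $\lambda>0$ or $\mu=1$, $\lambda>2$ the integral $\int_0^t A\,d\tau$ diverges rapidly, so no global existence comes out at all. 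The proposed rescaling does not repair this: $P=A^{-1}y=\sqrt{c}\,r_x$ satisfies $\partial_-P=-\tfrac{\gamma+1}{4}u^{(\gamma-3)/4}P^2-\tfrac{\lambda}{2(1+t)^{\mu}}(P+Q)$, in which the quadratic term carries \emph{no} $A^{-1}$ weight, so no Riccati inequality with threshold $\varepsilon\int_0^t A^{-1}\sim1$ can be read off from it.

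The idea you are missing is precisely the one you chose to discard. The integration by parts \eqref{y-int} is harmless in the small-data regime because only the $t$-derivative of $\theta_\gamma$ enters \eqref{s-1}: the paper (proof of Theorem \ref{main3}) replaces it by $\sqrt{c}\,s_x(t,x_-(t))=\tfrac{d}{dt}\bigl(\theta_\gamma(u(t,x_-(t)))-\theta_\gamma(1)\bigr)$, and by \eqref{smes} one has $|\theta_\gamma(u)-\theta_\gamma(1)|\le C^*\varepsilon$. Hence every term the integration by parts produces in \eqref{eq1} --- boundary pieces included --- is $O(\varepsilon)$ uniformly in time in the relevant ranges of $(\mu,\lambda)$, not $O(1)$ as you feared. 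This is what converts the dangerous coupling integral into an $O(\varepsilon)$ quantity (the structural point being that $q$ along a minus characteristic is a total $t$-derivative of an $O(\varepsilon)$ function), and it yields the two-sided inequalities \eqref{bl-smy}, \eqref{bl-smq} and their reversals, i.e.\ a \emph{scalar} $A^{-1}$-weighted Riccati inequality with $O(\varepsilon)$ data and no coupling term left: $-y(t,x_-(t))\le\tilde K\varepsilon+C^*\int_0^t A(\tau)^{-1}(-y(\tau,x_-(\tau)))^2\,d\tau$. Its solution stays $O(\varepsilon)$ exactly while $\varepsilon\int_0^t A(\tau)^{-1}\,d\tau\lesssim1$, which gives the three sharp rates in \eqref{lbt}; for the global cases the paper pushes the same identity further, deriving \eqref{tui} and, by a continuity argument, the decay \eqref{decay}, whence $T^*=\infty$. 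Without this normalized integration by parts, a Gronwall or bootstrap scheme of the kind you propose necessarily loses a factor of $A$ and cannot reach either the sharp lifespans or the global existence.
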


\subsection{Proof of Theorem \ref{main3}}
For simplicity, we just denote  a constant depending on $\| \varphi  \|_{L^{\infty} (\R)}$ and $\| \psi   \|_{L^{\infty} (\R)}$ by $C^*$ or $K^*$.
From \eqref{pes} in Lemma \ref{esP}, we have
\begin{align}
\| \Phi (t) \|_{L^{\infty} (\R^2)} \leq & \| \Phi (0) \|_{L^{\infty} (\R^2)} \notag \\
\leq & 2\| \varepsilon \varphi  (0) \|_{L^{\infty} (\R)} + \sup_{(x,y) \in \R^2 } | \eta (0,x) -\eta (0,y)  |  \leq  C^* \varepsilon \label{smp}
\end{align}
for sufficiently small $\varepsilon >0$.
Since $\Phi (t,x,x)=2v(t,x)$, \eqref{smp} implies that $\|v(t)\|_{L^{\infty} (\R)} \leq C^* \varepsilon $. 
From Lemma \ref{inf0} and \eqref{smp}, in the same as  the proof of Lemma \ref{upes}, we have
$$ \left|\eta (t,x) -\dfrac{2}{\gamma -1} \right| \leq C^* \varepsilon. $$
Hence we have
\begin{eqnarray}
 \left| u (t,x) - 1 \right| \leq C^* \varepsilon. \label{smes}
\end{eqnarray}
Therefore we have $(u,v) \in X_{m,\delta}$ for some $m>0$ and $\delta >0$.
In the same way as the derivation of \eqref{bl-y}, here we derive a differential inequality from \eqref{rs}, which leads the blow-up of solutions.
Using  $\sqrt{c}s_x (t,x_{-} (t)) =   \dfrac{d}{dt}( \theta_{\gamma}  (u(t,x_{-} (t))) - \theta_{\gamma}  (1))$ instead of \eqref{s-1} in \eqref{eq1}, we have from \eqref{smes}
\begin{eqnarray}
-y(t,x_{-}(t))\geq -y(0,x_0 )-K^* \varepsilon + C^* \int_{0} ^t A(\tau)^{-1} (-y(\tau,x_{-}(\tau))^2 d\tau.  \label{bl-smy}
\end{eqnarray}
Similarly we can obtain the following differential inequality for $q$
\begin{eqnarray}
-q(t,x_{-}(t))\geq -q(0,x_0 )-K^* \varepsilon + C^* \int_{0} ^t A(\tau)^{-1} (-q(\tau,x_{+}(\tau))^2 d\tau.  \label{bl-smq}
\end{eqnarray}
From the definitions of $y$ and $q$, 
if the assumption \eqref{KK} in Theorem \ref{main3} is satisfied for a sufficient large positive constant $K>0$, then $y(0,x_0 )-K^* \varepsilon$ or $q(0,x_0 )- K^* \varepsilon$
is positive.
Therefore, from the above differential inequalities, we have that $-y(t,x_{-}(t))$ or $-q(t,x_{+}(t))$ goes to infinity in finite time.
Moreover, solving the differential inequalities \eqref{bl-smy} and \eqref{bl-smq}, we have \eqref{ul}.

\subsection{Proof of Theorem \ref{main4}} 
First we consider the blow-up cases that $\mu >1$ and $\lambda \geq 0$ or $\mu=1$ and $0 \leq \lambda \leq 2$.
From \eqref{smes} and the boundedness of  $\|v(t)\|_{L^{\infty} (\R)}$, local solutions exist as long as $\|y (t)\|_{L^{\infty}}$ and $\|q (t)\|_{L^{\infty}}$ are bounded.
Now we estimate $y$ and $q$. From \eqref{bl-smy} and \eqref{bl-smq}, we have $-y(t,x)\geq -C_1 \varepsilon$ and $-q(t,x) \geq -C_1 \varepsilon$.
In the same way as the derivations of \eqref{bl-smy} and \eqref{bl-smq}, we obtain
\begin{eqnarray*}
-y(t,x_{-}(t))\leq \tilde{K} \varepsilon + C^* \int_{0} ^t A(\tau)^{-1} (-y(\tau,x_{-}(\tau))^2 d\tau
\end{eqnarray*}
and
\begin{eqnarray*}
-q(t,x_{+}(t))\leq \tilde{K} \varepsilon + C^* \int_{0} ^t A(\tau)^{-1} (-q(\tau,x_{+}(\tau))^2 d\tau.
\end{eqnarray*}
By solving these differential inequalities, we have the lower estimate of the lifespan \eqref{lbt} for the  blow-up cases.
Next we consider the global cases that $\mu <1$ and $\lambda > 0$ or $\mu=1$ and $ \lambda > 2$.
From \eqref{eq1}, we have by the definition of $y$
\begin{align}
|r_x (t,x_{-} (t))|\leq & CA^{-1}(t) \varepsilon + \frac{C \varepsilon}{(1+t)^\mu} +  A^{-1}(t)\int_0 ^t \frac{C\varepsilon A(\tau)}{(1+\tau)^{2\mu}} d\tau \notag \\
& + CA^{-1}(t)\int_0 ^t A(\tau) |r_x (\tau,x_{-} (\tau))|^2 d\tau. \label{tui}
\end{align}
The second  term in the right hand side of \eqref{tui} was obtained in the estimate for the third term in \eqref{eq1}. 
To obtain the third term in \eqref{tui}, we used $1/(1+\tau) \leq C/(1+\tau)^{\mu}$ for $\mu \leq 1$ in the estimate of the second term in \eqref{eq1}.
From the straightforward computation, we can check that $(1+t)^\mu A^{-1}(t) \int_0 ^t A(\tau)(1+\tau)^{-2\mu} d\tau$ is bounded by a constant $C_1 >0$ for $\mu <1$ and $\lambda > 0$ or $\mu=1$ and $ \lambda > 2$.
Hence, from the continuity argument, we show that following estimate holds for $t \in [0,T^*)$ if  $\varepsilon >0$ is small enough.
\begin{eqnarray} 
|r_x (t,x_{-} (t))|\leq 2(2C+C_1)\varepsilon (1+t)^{-\mu}. \label{decay}
\end{eqnarray}
Similarly we have the same estimate for $|s_x (t,x_+(t))|$ as in \eqref{decay}. 
Therefore, we have that $T^* =\infty$.

\section{The Euler equation with time and space dependent damping}

In this section, we consider the Cauchy problem \eqref{de0}.
For the coefficient of the damping term, we  assume that $a(t,x) \in C^1 _b ([0,\infty)\times \R)$ satisfies that either 
\begin{eqnarray} \label{yuki}
\lim_{x\rightarrow -\infty} a(t,x) = a(t),
\end{eqnarray}
and
\begin{eqnarray} \label{kiri}
0 \leq a(t,x) \leq A_1 (1+t)^{-\mu} \ \mbox{and} \ 
| a_t (t,x)|+ | a_x (t,x)|  \leq A_2(1+t)^{-\mu}
\end{eqnarray}
 or
\begin{eqnarray} \label{asu}
0 \leq a(t,x) \leq A_3 (1+|x|)^{-\mu} \ \mbox{and} \ 
| a_t (t,x)|+ | a_x (t,x)|  \leq A_4 (1+|x|)^{-\mu},
\end{eqnarray}
where $\mu >1$ and $A_j >0$ with $j=1,2,3,4$  are constants. 

Now we give a main theorem of this section, which give a sufficient condition for the occurrence of the derivative blow-up under the above condition on $a(t,x)$.
\begin{Thm}\label{main5}
Let $(u_0, v_0) \in X_{m,\delta}$, $\gamma >1$ and $\mu >1$. We assume that $a(t,x)$ satisfies either  \eqref{yuki} and \eqref{kiri} or \eqref{asu}.
Suppose that for some $u_- >0 $ and $v_- \in \R$ \begin{eqnarray} \label{asinf0} 
\lim_{x\rightarrow - \infty} (u_0 (x), v_0 (x)) = (u_{-} , v_{-}). 
\end{eqnarray}
Then there exists $\tilde{\varepsilon } >0$ depending $m$, $\delta$, $A_1$ and $A_3$ such that if 
\begin{eqnarray} \label{asx}
\sup_{(x,y) \in \R^2 }|s(0,x) +r(0,y)| <   \tilde{\varepsilon }
\end{eqnarray}
is satisfied, then we have constants $m_1 >0$ and $\delta_1 >0$  depending on 
$m$, $\delta$, $A_1$ and $A_3$ such that $(u(t), v(t)) \in X_{m_1, \delta_1}$ for all $t \in [0,T^*)$.
Furthermore, suppose for some constant $K$ depending on $m$, $\delta$ and $A_j$ with $j=1,2,3,4$ that
\begin{eqnarray}
r_x (0,x_0 ) \leq -K \ \mbox{or} \ s_x (0,x_0 ) \leq -K
\end{eqnarray}
for some $x_0 \in \R$. Then  $T^* < \infty$. 
\end{Thm}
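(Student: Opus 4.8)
The plan is to follow the two-part scheme already used for Theorems \ref{main2} and \ref{main1}: first show that $(u,v)$ stays in some $X_{m_1,\delta_1}$ throughout $[0,T^*)$ (boundedness of $u$, $v$ and $p'$), and then derive a Riccati-type differential inequality along a single characteristic which forces $r_x$ or $s_x$ to diverge in finite time. The starting point is that for variable $a(t,x)$ the Riemann invariants still obey the transport equations $\partial_- r = -\tfrac{a(t,x)}{2}(r+s)$ and $\partial_+ s = -\tfrac{a(t,x)}{2}(r+s)$, obtained exactly as \eqref{ww}: that computation uses only $u_t=v_x$ and $\eta'=-c$ and is insensitive to the $x$-dependence of $a$. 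Accordingly I would treat the time-like case \eqref{yuki}--\eqref{kiri} and the spatial case \eqref{asu} separately, the sole difference being how the time-integral of $a$ along characteristics is controlled.

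For the boundedness step I would reprove the analogues of Lemmas \ref{esP}, \ref{inf0} and \ref{upes}. Summing the two transport equations along the backward characteristics through $(t,x)$ and $(t,y)$ gives, as in \eqref{risi}, an integral identity for $\Phi(t,x,y)=r(t,x)+s(t,y)$ whose integrand is bounded by $\tfrac12\bigl(a(\tau,x_{-}(\tau))+a(\tau,x_{+}(\tau))\bigr)\|\Phi(\tau)\|_{L^{\infty}(\R^2)}$. In the time-like case \eqref{kiri} gives $\int_0^t a\,d\tau\le A_1/(\mu-1)$ uniformly in $x$, so Gronwall yields $\|\Phi(t)\|_{L^{\infty}}\le C\|\Phi(0)\|_{L^{\infty}}\le C\tilde{\varepsilon }$; choosing $\tilde{\varepsilon }$ small and running the limit argument of Lemma \ref{inf0} (valid since $\lim_{x\to-\infty}x_{\pm}(\tau)=-\infty$ and, by \eqref{yuki}, $a(\tau,x_{\pm}(\tau))\to a(\tau)$, while the self-referential Gronwall structure pins the limit at $\tfrac{4}{\gamma-1}u_-^{-(\gamma-1)/2}$) together with Lemma \ref{upes} yields $\delta_1\le u\le m_1$. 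In the spatial case the time-integral of $a$ along a characteristic is no longer obviously finite, so I would use a bootstrap: assuming a priori $\delta_1/2\le u\le 2m_1$ on $[0,T]$ gives $c\ge c_0>0$, whence the substitution $\xi=x_{\pm}(\tau)$ (with $|d\xi|=c\,d\tau\ge c_0\,d\tau$) yields
\begin{equation*}
\int_0^t a(\tau,x_{\pm}(\tau))\,d\tau \le A_3\int_0^t (1+|x_{\pm}(\tau)|)^{-\mu}\,d\tau \le \frac{A_3}{c_0}\int_{\R}(1+|\xi|)^{-\mu}\,d\xi <\infty
\end{equation*}
since $\mu>1$. Moreover here $a(\tau,x_{\pm}(\tau))\to0$ as $x\to-\infty$, so the correction in the $\eta$-identity vanishes in the limit and $\lim_{x\to-\infty}\eta(t,x)$ is pinned at its initial value. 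This closes the same $\Phi$-estimate and improves the bound on $u$, so the bootstrap propagates to all of $[0,T^*)$.

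For the blow-up step I would differentiate the transport equations in $x$ and multiply by $\sqrt c$ and the characteristic integrating factor to obtain the variable-coefficient analogue of \eqref{rs}: along the minus characteristic, $y=\sqrt c\,r_x$ satisfies $\partial_- y = -(\text{positive})\,u^{(\gamma-3)/4}y^2 + (\text{lower order})$, the lower-order terms being the self-coupling $a\,q$ together with contributions of $a_x v$ and $a v_x$. The coupling term $\int_0^t a\,q\,d\tau$ I would handle exactly as in \eqref{y-int}: using the $a$-independent identity \eqref{s-1}, namely $\sqrt c\,s_x = \tfrac{d}{d\tau}\theta_{\gamma}(u)$ on the minus characteristic, integration by parts throws the $\tau$-derivative onto $a(\tau,x_{-}(\tau))$, producing $a_t-c\,a_x$ which is controlled by \eqref{kiri} or \eqref{asu}, plus a bounded boundary term (here $\theta_{\gamma}(u)$ is bounded because $u$ is). Collecting everything gives, along the minus characteristic through $(0,x_0)$,
\begin{equation*}
-y(t,x_{-}(t)) \ge -y(0,x_0)-C + K\int_0^t B(\tau)^{-1}\bigl(-y(\tau,x_{-}(\tau))\bigr)^2\,d\tau,
\end{equation*}
where $C$ is finite and the characteristic integrating factor $B(\tau)$ is bounded above and below by positive constants (because $\int_0^t a\,d\tau$ is bounded in both cases). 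If $r_x(0,x_0)\le -K$ with $K$ large, then $-y(0,x_0)-C>0$, and since $\int_0^\infty B(\tau)^{-1}\,d\tau=\infty$ the comparison ODE $F'\ge K B^{-1}F^2$ blows up in finite time, exactly as in the proof of Theorem \ref{main2}; the case $s_x(0,x_0)\le -K$ is symmetric.

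I expect the main obstacle to be the spatial-decay case: unlike the time-like case, the damping is not integrable in $t$ at a fixed point, and the control of $\int_0^t a(\tau,x_{\pm}(\tau))\,d\tau$ genuinely requires the lower speed bound $c\ge c_0$, which in turn requires the upper bound on $u$ one is trying to establish; the bootstrap must therefore be arranged so that the recovered bound is strictly sharper than the assumed one. A secondary technical point is verifying that every lower-order term generated by the integration by parts (those carrying $a_t$ and $a_x$) is absorbed into the finite constant $C$ uniformly in $t$, for which the decay hypotheses \eqref{kiri}/\eqref{asu} with $\mu>1$ are precisely what is needed.
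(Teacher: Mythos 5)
Your proposal is correct and takes essentially the same route as the paper: a Gronwall estimate on $\Phi(t,x,y)=r(t,x)+s(t,y)$ combined with the $x\to-\infty$ limit lemma to pin $u$ between positive constants, a bootstrap in the spatially decaying case that exploits the fact that characteristics escape at a definite speed so that $\int_0^t a(\tau,x_\pm(\tau))\,d\tau$ stays bounded (the paper uses the linear escape bound $|x_\pm(t)|\ge c(8u_-)t-|x_0|$ where you substitute $\xi=x_\pm(\tau)$, an equivalent device), and then the Lax-type identity with integrating factor $A_-(t)$ and integration by parts via $\sqrt{c}\,s_x=\tfrac{d}{d\tau}\theta_\gamma(u)$ to reach the Riccati inequality that forces blow-up. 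All the points you flag as delicate (strict improvement in the bootstrap, uniform integrability of the $a_t$, $a_x$ terms) are exactly the ones the paper's Lemmas \ref{esPx}--\ref{upest2} and identity \eqref{eq1x} handle, and your treatment of them is sound.
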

 This theorem is an analogy of Theorem \ref{main2} and  blow-up theorems for $2\times 2$ hyperbolic systems in Lax \cite{lax}, Zabusky \cite{z},  Klainerman and Majda \cite{km} and  Manfrin \cite{mm}.

\subsection{Preliminaries for the proof of Theorem \ref{main5}}
Now we prepare  some identities for Riemann invariants and its derivatives in the same way as in Section $2$.
We denote $A_\pm (t) = \exp\left( \int_0 ^t  a(t,x_{\pm} (\tau))/2 d\tau\right)$. We set $r$ and $s$ and $\partial_{\pm}$ as in \eqref{ri} and \eqref{ww}.
$r$ and $s$ satisfy that
\begin{eqnarray}\label{wwv}\left\{
\begin{array}{ll} 
\partial_- r =-\dfrac{a(t,x) }{2}(r+s),\\
 \partial_+ s =-\dfrac{a(t,x) }{2}(r+s).
\end{array}\right.
\end{eqnarray}
In the same way as the derivations of \eqref{rri} and \eqref{ssi}, we have
\begin{eqnarray}
A_{-} (t) r(t,x) =  r(0,x_{-} (0))-\int_0 ^t  \dfrac{ A_- (\tau) a(t,x_{-} (\tau)) s(\tau , x_{-}(\tau)) }{2} d \tau, \label{riv}  \\
A_{+}(t) s(t,y) = s(0,x_{+} (0))-\int_0 ^t \dfrac{ A_+ (\tau) a(t,x_{+} (\tau)) r(\tau , x_{+}(\tau)) }{2}  d \tau. \label{siv}
\end{eqnarray}
We set $Y(t)= A_- (t) \sqrt{c(u(t,x_{-} (t)))} r_x (t,x_{-} (t))$. In the same way as the derivation of \eqref{eq1}, we have 
\begin{align}
Y(t)  =& Y(0) +\int_0 ^t \dfrac{d}{d\tau} (A_- (\tau) a (t,x_- (\tau)))  \theta_{\gamma} (\tau,x_{-} (\tau)) d\tau \notag \\
&-   \frac{1}{2}(A_- (t) a (t,x_- (t)) \theta_{\gamma}(t,x_{-}(t))  a(0,x_{-} (0)) \theta_{\gamma} (0,x_{-} (0)))  \notag \\
&-\int_0 ^t A_- (\tau) \dfrac{a_x (\tau ,x_{-} ( \tau))}{2} \sqrt{c} (r+s)  d\tau \notag \\
&-\int_0 ^t A_- (\tau)^{-1} \dfrac{\gamma +1}{4}u^{\frac{\gamma -3}{4}} Y(\tau)^2 d\tau. \label{eq1x}
\end{align}
The following Lemma corresponds to Lemma \ref{inf0}.
\begin{Lemma} \label{inf0x} 
Suppose that $a(t,x)$ satisfies either  \eqref{yuki} and \eqref{kiri} or \eqref{asu}. We assume that $\lim_{x\rightarrow - \infty}   (u_0 (x), v_0 (x))=(u_-, v_-)$.
Then we have that for $C^1$ solutions
\begin{eqnarray} \label{infu}
\lim_{x\rightarrow - \infty} u (t,x)=u_-
\end{eqnarray}
for all $t \in [0,T^*)$.
\end{Lemma}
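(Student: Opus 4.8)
The plan is to mirror the proof of Lemma~\ref{inf0}, working with
\[
S(t)=\limsup_{x\to-\infty}2\eta(t,x),\qquad I(t)=\liminf_{x\to-\infty}2\eta(t,x),
\]
and to show that both coincide with $\frac{4}{\gamma-1}{u_-}^{-(\gamma-1)/2}$, which is equivalent to \eqref{infu}. Fix $t<T^*$; by the definition of $T^*$ the solution lies in $C^1_b$ on $[0,t]$ and $u$ is bounded below there, so $c=\sqrt{-p'(u)}$ is bounded and the characteristics obey $|x_\pm(\tau)-x|\le C_t$ for $\tau\in[0,t]$. Hence $x_\pm(\tau)\to-\infty$ uniformly in $\tau\in[0,t]$ as $x\to-\infty$, and, since the initial data converge, the initial terms satisfy $s(0,x_+(0))-r(0,x_-(0))\to\frac{4}{\gamma-1}{u_-}^{-(\gamma-1)/2}$.

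The genuinely new ingredient, absent in Lemma~\ref{inf0}, is that the amplification factors $A_\pm$ now depend on $x$ through the characteristics, so they cannot be pulled out of the $\limsup$ the way the $t$-only factor $A(t)$ was. The remedy is to show that $A_\pm$ share a common limit depending only on $t$. Under \eqref{asu} one has $a(\tau,x_\pm(\tau))\le A_3(1+|x_\pm(\tau)|)^{-\mu}\to0$, so $A_\pm(t)\to1$ uniformly on $[0,t]$. Under \eqref{yuki}--\eqref{kiri}, \eqref{yuki} gives $a(\tau,x_\pm(\tau))\to a(\tau)$ (with \eqref{kiri} supplying the domination needed to pass the limit under the $\tau$-integral), so $A_\pm(t)\to A_*(t):=\exp(\int_0^t a(\tau)/2\,d\tau)$, the \emph{same} limit for both signs. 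In either case write $A_*(t)$ for this common positive limit, with $A_*\equiv1$ in the spatial case.

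Next I would subtract \eqref{riv} from \eqref{siv} taken along the two characteristics through the same point $(t,x)$, obtaining an expression for $A_+(t)s(t,x)-A_-(t)r(t,x)$, and take $\limsup_{x\to-\infty}$. Because $A_\pm\to A_*$ uniformly and $r,s$ are bounded, the corrections $(A_\pm-A_*)$ times bounded quantities vanish and the left side tends to $A_*(t)S(t)$. In the spatial case \eqref{asu} the remainder integrand carries a factor $a(\tau,x_\pm(\tau))\to0$, so the integral vanishes by dominated convergence and $A_*(t)S(t)=\frac{4}{\gamma-1}{u_-}^{-(\gamma-1)/2}$, i.e.\ $S(t)=\frac{4}{\gamma-1}{u_-}^{-(\gamma-1)/2}$, at once. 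In the time-decay case the integrand converges to $\frac{A_*(\tau)a(\tau)}{2}\bigl(s(\tau,x_-(\tau))-r(\tau,x_+(\tau))\bigr)$, and, exactly as in Lemma~\ref{inf0}, estimating this difference by $\sup\{\,\cdots:x\le-R\}$ and letting $R\to\infty$ yields
\[
A_*(t)S(t)\le \frac{4}{\gamma-1}{u_-}^{-(\gamma-1)/2}+\int_0^t\frac{A_*(\tau)a(\tau)}{2}\,S(\tau)\,d\tau .
\]
Solving this integral inequality (dividing by $A_*$ and applying the Gronwall argument used for $\Xi$ in Lemma~\ref{esP}) gives $S(t)\le\frac{4}{\gamma-1}{u_-}^{-(\gamma-1)/2}$; the mirror computation with $I$ produces $I(t)\ge\frac{4}{\gamma-1}{u_-}^{-(\gamma-1)/2}$. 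Since $I\le S$, both equal $\frac{4}{\gamma-1}{u_-}^{-(\gamma-1)/2}$ and \eqref{infu} follows.

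The main obstacle is precisely the step isolated above: replacing the $t$-only factor $A(t)$ of Lemma~\ref{inf0} by the $x$-dependent $A_\pm$ and proving that they share the common limit $A_*(t)$, together with controlling $s(\tau,x_-(\tau))-r(\tau,x_+(\tau))$ by $S(\tau)$. Both hinge on the finite speed of propagation, which forces the far field of $r,s$ at time $\tau$ to be inherited from the convergent initial data; the spatial-decay hypothesis \eqref{asu} is considerably cleaner, since there the damping kernel itself disappears in the limit and no Gronwall step is required.
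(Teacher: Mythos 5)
Your proposal is correct and takes essentially the same route as the paper: the paper's own proof is omitted, stating only that the result follows from \eqref{riv} and \eqref{siv} in almost the same way as Lemma \ref{inf0}, and your write-up supplies exactly those details. In particular, your key observation that the $x$-dependent factors $A_\pm$ converge (uniformly on $[0,t]$, by finite propagation speed together with \eqref{asu} or \eqref{yuki}--\eqref{kiri} and dominated convergence) to a common limit $A_*(t)$ is precisely the ingredient that lets the Gronwall argument of Lemma \ref{inf0} carry over.
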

\begin{proof}
Using \eqref{riv} and \eqref{siv}, We can show this Lemma in the almost same way as the proof of Lemma \ref{inf0}. So we omit it.
\end{proof}
As in Section $2$, we put $\Phi (t,x,y) = r(t,x) + s(t,y)$.
If $a(t,x)$ decays time-like, we can show the following estimate for $\Phi  $.
\begin{Lemma} \label{esPx} 
Suppose that $a(t,x)$ satisfies  \eqref{kiri}.
Then, for $C^1$ solutions, there exists a constant $C_a >0$ depending on $A_1$.
\begin{eqnarray} \label{esP1x}
\|\Phi (t)\|_{L^\infty (\R^2)} \leq C_a \|\Phi (0)\|_{L^\infty (\R^2)}.
\end{eqnarray}
\end{Lemma}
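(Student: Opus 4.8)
The plan is to follow the strategy of Lemma \ref{esP}, integrating the transport equations \eqref{wwv} along characteristics and closing the estimate with Gronwall's inequality. The essential difference from the space-independent case is that the two integrating factors $A_-(t)$ and $A_+(t)$ no longer coincide, so the clean factoring that produced the sharp bound $\|\Phi(t)\|\le\|\Phi(0)\|$ in Lemma \ref{esP} is unavailable. To avoid this, I would not use the factored identities \eqref{riv}--\eqref{siv}; instead I would integrate \eqref{wwv} directly, so that the source term appears as $r+s$ evaluated at a \emph{single} point. Since $r(\tau,z)+s(\tau,z)=2v(\tau,z)=\Phi(\tau,z,z)$, this source is controlled pointwise by $\|\Phi(\tau)\|_{L^\infty(\R^2)}$, which is exactly what is needed to set up a self-contained Gronwall argument for $\Phi$.

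Concretely, integrating the first equation in \eqref{wwv} along the minus characteristic through $(t,x)$ and the second along the plus characteristic through $(t,y)$ gives
\[
r(t,x)=r(0,x_-(0))-\int_0^t \frac{a(\tau,x_-(\tau))}{2}\big(r+s\big)(\tau,x_-(\tau))\,d\tau,
\]
\[
s(t,y)=s(0,x_+(0))-\int_0^t \frac{a(\tau,x_+(\tau))}{2}\big(r+s\big)(\tau,x_+(\tau))\,d\tau .
\]
Adding these and using $|(r+s)(\tau,z)|=|\Phi(\tau,z,z)|\le\|\Phi(\tau)\|_{L^\infty(\R^2)}$ together with the bound $a(\tau,\cdot)\le A_1(1+\tau)^{-\mu}$ from \eqref{kiri}, and then taking the supremum over $(x,y)\in\R^2$, I would obtain
\[
\|\Phi(t)\|_{L^\infty(\R^2)}\le\|\Phi(0)\|_{L^\infty(\R^2)}+A_1\int_0^t (1+\tau)^{-\mu}\,\|\Phi(\tau)\|_{L^\infty(\R^2)}\,d\tau .
\]
Since $\mu>1$, the kernel is time-integrable with $\int_0^\infty (1+\tau)^{-\mu}\,d\tau=\tfrac{1}{\mu-1}$, so Gronwall's inequality yields $\|\Phi(t)\|_{L^\infty(\R^2)}\le \exp\!\big(\tfrac{A_1}{\mu-1}\big)\|\Phi(0)\|_{L^\infty(\R^2)}$, i.e.\ \eqref{esP1x} with $C_a=\exp\!\big(\tfrac{A_1}{\mu-1}\big)$, which depends only on $A_1$ (and $\mu$) as required.

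The main obstacle is conceptual rather than computational: the absence of a common integrating factor means one cannot recover the sharp nonexpansive bound of Lemma \ref{esP}. If one instead tried to work from \eqref{riv}--\eqref{siv}, the sum $A_-(t)r(t,x)+A_+(t)s(t,y)$ would leave the two source terms weighted by the distinct factors $A_-(\tau)$ and $A_+(\tau)$ and would combine $r$ and $s$ at \emph{different} spatial points, so the right-hand side could not be folded back into $\|\Phi\|$ without controlling $r$ and $s$ individually. The point to emphasize is that we do not need the sharp constant: the hypothesis $\mu>1$ makes $a$ integrable in time, and this integrability is precisely what allows the crude, same-point estimate above to close through Gronwall with a finite constant $C_a$. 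This is why the time-decay assumption \eqref{kiri} (rather than the merely bounded damping of the constant-coefficient theory) enters the statement.
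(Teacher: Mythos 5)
Your proof is correct and follows essentially the same route as the paper: the paper likewise integrates \eqref{wwv} directly along the two characteristic curves (forgoing the integrating factors $A_{\pm}$ precisely because they no longer coincide), bounds the source terms by $\|\Phi(\tau)\|_{L^\infty(\R^2)}$ using the decay bound in \eqref{kiri}, and closes with the Gronwall inequality, yielding a constant of the form $\exp\left(A_1/(\mu-1)\right)$. The only cosmetic discrepancy is that the paper's displayed integrands pair $r$ and $s$ at mixed characteristic points (apparently carried over from the structure of the proof of Lemma \ref{esP}), whereas you evaluate both at a single point; since either combination is dominated by the two-variable norm $\|\Phi(\tau)\|_{L^\infty(\R^2)}$, this makes no difference to the argument.
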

\begin{proof}
From \eqref{wwv}, we have 
\begin{eqnarray*}
 r(t,x) =  r(0,x_{-} (0))-\int_0 ^t  \dfrac{  a(t,x_{-} (\tau)) (r(\tau , x_{+}(\tau))+s(\tau , x_{-}(\tau))) }{2} d \tau ,  \\
 s(t,y) = s(0,x_{+} (0))-\int_0 ^t \dfrac{  a(t,x_{+} (\tau)) (r(\tau , x_{+}(\tau))+s(\tau , x_{-}(\tau)))  }{2}  d \tau. 
\end{eqnarray*}
Summing up the above equality and taking $L^{\infty}$-norm on $\R^2$, we have from \eqref{kiri}
\begin{eqnarray*}
\|\Phi (t)\|_{L^\infty (\R^2)} \leq \|\Phi (0)\|_{L^\infty (\R^2)} + \int_0 ^t A_1 (1+\tau)^{-\mu}\|\Phi (\tau)\|_{L^\infty (\R^2)} d\tau 
\end{eqnarray*}
from which, the Gronwall inequality implies \eqref{esP1x}.
\end{proof}

\begin{Lemma}\label{upest}
Let $(u_0, v_0) \in X_{m,\delta}$.
Suppose that $a(t,x)$ satisfies   \eqref{yuki} and \eqref{kiri}. If $\sup_{(x,y) \in \R^2 }|s(0,x) +r(0,y)| < \tilde{\varepsilon}$ for sufficiently small $\tilde{\varepsilon }>0$ depending on $m$, $\delta$ and $A_1$. then there exists a constant $M_1 >0$ such that
\begin{eqnarray*}
u(t,x) \leq M_1
\end{eqnarray*}
for all $(t,x) \in [0,T^*) \times \R$.
\end{Lemma}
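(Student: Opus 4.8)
The plan is to follow the proof of Lemma \ref{upes} almost verbatim, replacing the a priori bound of Lemma \ref{esP} by its time-like analogue Lemma \ref{esPx} and the limit computation of Lemma \ref{inf0} by Lemma \ref{inf0x}. The only structural novelty is that the estimate for $\Phi$ now carries the amplification constant $C_a$ coming from \eqref{kiri}, so it is the smallness of $\tilde\varepsilon$ (rather than the threshold $u_-/2$ used in Lemma \ref{upes}) that will be needed to close the argument.

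First I would record that, by Lemma \ref{esPx} together with the hypothesis $\sup_{(x,y)}|s(0,x)+r(0,y)|<\tilde\varepsilon$,
\begin{eqnarray*}
\|\Phi(t)\|_{L^\infty(\R^2)} \leq C_a \|\Phi(0)\|_{L^\infty(\R^2)} \leq C_a \tilde\varepsilon
\end{eqnarray*}
for all $t\in[0,T^*)$. Since $\Phi(t,x,x)=r(t,x)+s(t,x)=2v(t,x)$, this gives $|v(t,x)|\leq C_a\tilde\varepsilon/2$. Next, from $r=v-\eta$ and $s=v+\eta$ one has $\eta(t,y)-\eta(t,x)=\Phi(t,x,y)-v(t,x)-v(t,y)$, whence
\begin{eqnarray*}
|\eta(t,x)-\eta(t,y)| \leq |\Phi(t,x,y)| + |v(t,x)| + |v(t,y)| \leq 2C_a\tilde\varepsilon .
\end{eqnarray*}
Letting $y\to-\infty$ and invoking Lemma \ref{inf0x}, which yields $\lim_{y\to-\infty}u(t,y)=u_-$ and hence $\lim_{y\to-\infty}\eta(t,y)=\frac{2}{\gamma-1}{u_-}^{-(\gamma-1)/2}$, I obtain
\begin{eqnarray*}
\left|\eta(t,x) - \frac{2}{\gamma-1}{u_-}^{-(\gamma-1)/2}\right| \leq 2C_a\tilde\varepsilon .
\end{eqnarray*}

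To finish, note that $(u_0,v_0)\in X_{m,\delta}$ forces $\delta \leq u_- $ and $u_-$ bounded above by the $L^\infty$ bound in $X_{m,\delta}$, so $\frac{2}{\gamma-1}{u_-}^{-(\gamma-1)/2}$ is bounded below by a positive constant determined by $\gamma$ and the $X_{m,\delta}$ bounds. Choosing $\tilde\varepsilon$ small enough, depending on $m$, $\delta$ and $A_1$ through $C_a$, so that $2C_a\tilde\varepsilon < \frac{2}{\gamma-1}{u_-}^{-(\gamma-1)/2}$, the last display produces a strictly positive lower bound for $\eta(t,x)$. Because $\eta=\frac{2}{\gamma-1}u^{-(\gamma-1)/2}$ is strictly decreasing in $u$ for $\gamma>1$, a positive lower bound on $\eta$ converts into the sought upper bound $u(t,x)\leq M_1$, with $M_1$ depending only on $\gamma$, $m$, $\delta$ and $A_1$.

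The essential difference from the space-independent setting, and the only genuinely delicate point, is precisely the appearance of $C_a$ in Lemma \ref{esPx}: the a priori control of $\Phi$ is no longer contractive but merely bounded, so one cannot afford to let $\|\Phi(0)\|_{L^\infty(\R^2)}$ be only smaller than $u_-/2$ as in Lemma \ref{upes}; the admissible threshold must be shrunk by the factor $C_a$, which is why the statement demands $\tilde\varepsilon$ small in terms of $A_1$. The finiteness of $C_a$ itself is what the hypothesis $\mu>1$ guarantees, since it makes $\int_0^\infty A_1(1+\tau)^{-\mu}\,d\tau<\infty$ in the Gronwall step behind Lemma \ref{esPx}.
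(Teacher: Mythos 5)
Your proposal is correct and follows essentially the same route as the paper: the paper's own proof of Lemma \ref{upest} simply declares it ``completely the same as'' that of Lemma \ref{upes}, and your write-up carries out exactly that adaptation, substituting Lemma \ref{esPx} for Lemma \ref{esP} and Lemma \ref{inf0x} for Lemma \ref{inf0}, with the amplification constant $C_a$ absorbed by taking $\tilde{\varepsilon}$ small depending on $A_1$. Your closing remark that $\delta \leq u_- \leq m$, so that the threshold $\frac{2}{\gamma-1}{u_-}^{-(\gamma-1)/2}$ is controlled by the $X_{m,\delta}$ bounds alone, correctly accounts for the stated dependence of $\tilde{\varepsilon}$ on $m$, $\delta$ and $A_1$.
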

\begin{proof}
The proof of this lemma is completely  same as of Lemma \ref{upes}. So we omit it.
\end{proof}

If $a(t,x)$ decays specially, we use the following Lemma. 
\begin{Lemma}\label{upest2}
Let $(u_0, v_0) \in X_{m,\delta}$ and $x_0 \in \R$.
We assume  that \eqref{asu} and  \eqref{asinf0}  are satisfied. Suppose that $\sup_{(x,y) \in \R^2 }|s(0,x) +r(0,y)| < \tilde{\varepsilon  }$ for sufficiently small $\tilde{\varepsilon }>0$ depending on $m$, $\delta$ and $A_3$. Then there exists a constant 
 $\tilde{C_a}>0$ depending  $m$, $\delta$ and $A_3$ such that 
\begin{eqnarray} \label{klein}
\| \Phi(t) \|_{L^{\infty}} \leq \tilde{C_a} \| \Phi (0) \|
\end{eqnarray}
and
\begin{eqnarray} \label{riz}
\frac{u_-}{4}\leq u(t,x) \leq 4 u_{-}
\end{eqnarray}
for all $(t,x) \in [0,T^*) \times \R$, where 
\begin{eqnarray*}
\tilde{C_a} =\exp\left(4 A_3\int_0 ^{\infty}  (1+ c(8 u_{-} ) \tau )^{-\mu} d\tau \right).
\end{eqnarray*}
\end{Lemma}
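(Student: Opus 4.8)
The plan is to establish \eqref{klein} and \eqref{riz} simultaneously by a continuity (bootstrap) argument in which an a priori \emph{upper} bound on $u$ controls the characteristic speed, and the resulting escape of each characteristic to spatial infinity is what converts the merely spatial decay $a(t,x)\le A_3(1+|x|)^{-\mu}$ into a time-integrable quantity. This is the essential difference from the time-like case of Lemma~\ref{esPx}, where decay in $t$ was available outright. Concretely, I would set
\[
T^{**}=\sup\{\,t\in[0,T^*)\mid u(s,x)\le 8u_-\ \text{for all}\ (s,x)\in[0,t]\times\R\,\}.
\]
Since $\Phi(0,x,x)=2v_0(x)$ and, taking $y\to-\infty$ in $s(0,x)+r(0,y)$ together with $\lim_{x\to-\infty}\eta(0,x)=\tfrac{2}{\gamma-1}u_-^{-(\gamma-1)/2}$, the hypothesis $\|\Phi(0)\|_{L^\infty}<\tilde\varepsilon$ forces $u_0(x)\le 4u_-<8u_-$ for all $x$ once $\tilde\varepsilon$ is small; hence $T^{**}>0$, and it suffices to prove $T^{**}=T^*$ while verifying the stated bounds on $[0,T^{**})$.

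On $[0,T^{**})$ I would exploit $u\le 8u_-$ as follows. Because $c(u)=u^{-(\gamma+1)/2}$ is decreasing, the a priori bound gives $c(u)\ge c(8u_-)>0$, so each characteristic is strictly monotone with $|\tfrac{d}{d\tau}x_\pm|\ge c(8u_-)$. Writing $\tau_*$ for the (at most unique) time at which $x_\pm(\tau)$ crosses the origin, this yields $|x_\pm(\tau)|\ge c(8u_-)|\tau-\tau_*|$, and the change of variables $\sigma=x_\pm(\tau)$ bounds
\[
\int_0^t a(\tau,x_\pm(\tau))\,d\tau\le A_3\int_0^t\bigl(1+|x_\pm(\tau)|\bigr)^{-\mu}\,d\tau\le 2A_3\int_0^\infty\bigl(1+c(8u_-)\tau\bigr)^{-\mu}\,d\tau,
\]
which is finite precisely because $\mu>1$. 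Summing the integral representations of $r(t,x)$ and $s(t,y)$ exactly as in the proof of Lemma~\ref{esPx}, taking $L^\infty$-norms over $(x,y)\in\R^2$, and applying the Gronwall inequality, I obtain $\|\Phi(t)\|_{L^\infty}\le\|\Phi(0)\|_{L^\infty}\exp\bigl(2A_3\int_0^\infty(1+c(8u_-)\tau)^{-\mu}d\tau\bigr)\le\tilde C_a\|\Phi(0)\|_{L^\infty}$, which is \eqref{klein}; the factor $4$ in the stated $\tilde C_a$ is a harmless overestimate of the exponent.

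It then remains to close the loop by upgrading $u\le 8u_-$ to \eqref{riz}, which I would do by repeating the argument of Lemma~\ref{upes}. From $\Phi(t,x,x)=2v$ we get $|v(t,x)|\le\tfrac12\tilde C_a\tilde\varepsilon$, and the triangle inequality gives $|\eta(t,x)-\eta(t,y)|\le\|\Phi(t)\|_{L^\infty}+|v(t,x)|+|v(t,y)|\le 2\tilde C_a\tilde\varepsilon$. Letting $y\to-\infty$ and invoking Lemma~\ref{inf0x} to replace $\eta(t,y)$ by $\tfrac{2}{\gamma-1}u_-^{-(\gamma-1)/2}$ shows that $\eta(t,x)$ stays within $O(\tilde C_a\tilde\varepsilon)$ of its value at $u_-$; since $u\mapsto\eta(u)$ is a decreasing bijection, fixing $\tilde\varepsilon$ small (depending only on $m,\delta,A_3$, because $\delta\le u_-\le m$ controls $\tilde C_a$ through $c(8u_-)$) yields $u_-/4\le u(t,x)\le 4u_-$, i.e.\ \eqref{riz}. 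The strict improvement $u\le 4u_-<8u_-$ and continuity in $t$ preclude $T^{**}<T^*$, so $T^{**}=T^*$ and both estimates hold on all of $[0,T^*)$. The main obstacle is precisely the self-consistency of this loop: the a priori window $u\le 8u_-$ must be wide enough that the constant $\tilde C_a$ it produces is finite and independent of $t$, yet $\tilde\varepsilon$ must be chosen so small that the conclusion lands strictly inside the narrower window $[u_-/4,4u_-]$, for otherwise the continuity argument cannot close.
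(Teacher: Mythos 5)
Your proposal is correct and follows essentially the same route as the paper: a continuity/bootstrap argument in which the a priori bound $u\le 8u_-$ forces each characteristic to move at speed at least $c(8u_-)$, so that the spatial decay of $a$ becomes time-integrable along $x_\pm(\tau)$, Gronwall then yields \eqref{klein}, and the argument of Lemmas \ref{inf0x} and \ref{upes} closes the narrower window \eqref{riz}. The differences are cosmetic: you bootstrap only the upper bound $u\le 8u_-$ (the paper bootstraps both \eqref{klein} and \eqref{riz} and widens to $u_-/8\le u\le 8u_-$ by continuity before re-deriving them), and your Gronwall exponent $2A_3\int_0^\infty(1+c(8u_-)\tau)^{-\mu}d\tau$ is a minor sharpening of the paper's $4A_3$ constant.
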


\begin{proof}
We prove this lemma by using the bootstrap argument.
We set $$T_m = \sup\{ T>0 \ | \  \mbox{\eqref{klein} and \eqref{riz} hold on} \ [0,T] \}.$$
From the fact that $\tilde{C_a} > 1$ and the continuity of $\| \Phi (t) \|_{L^{\infty}} $, \eqref{klein} holds neat $t=0$.
From  \eqref{asinf0} and \eqref{asx}, in the same way as the proof 
of Lemma \ref{upes}, there exists a positive number $\tilde{\varepsilon_0} $ depending on $u_-$ such that
$|u_0 (x) - u_- | \leq u_- / 2$, if $\tilde{\varepsilon }  \leq \tilde{\varepsilon_0} $.
So \eqref{riz} holds near $t=0$.
Hence we have $T_m >0$ for sufficiently small $\tilde{\varepsilon }>0.$
We show $T_m =T^*$ for sufficiently  small $\tilde{\varepsilon }$.
We suppose that $T_m < T^*$.
We note that \eqref{klein} and \eqref{riz}  hold on $[0,T_m]$.
For some small $t_0>0$, it holds that on $[0, T_m + t_0]$
\begin{eqnarray} \label{riz2}
\frac{u_-}{8}\leq u(t,x) \leq 8 u_{-}.
\end{eqnarray}
From the definition of $x_{\pm} (t)$ and \eqref{riz2}, we have for some $x_0$
\begin{eqnarray} \label{yugio}
|x_{\pm} (t)|  \geq c(8 u_{-} ) t - |x_0|
\end{eqnarray}
on $[0, T_m + t_0]$.
From \eqref{wwv} and \eqref{yugio}, we have
\begin{align*}
\|\Phi (t)\|_{L^\infty (\R^2)} \leq & \|\Phi (0)\|_{L^\infty (\R^2)} + \int_0 ^t A_3 \left((1+|x_{+} (\tau)|)^{-\mu} \right. \\
& +(1+|x_{-} (\tau)|)^{-\mu} \left. \right)\|\Phi (\tau)\|_{L^\infty (\R^2)} d\tau, \\
\leq & \|\Phi (0)\|_{L^\infty (\R^2)} + 2A_3 \int_0 ^t  (1+| c(8 u_{-} ) \tau - |x_0||)^{-\mu} \|\Phi (\tau)\|_{L^\infty (\R^2)} d\tau.
\end{align*}
By the Grownwall inequality, we have on $[0, T_m + t_0]$
\begin{align*}
\|\Phi (t)\|_{L^\infty (\R^2)} \leq & \exp\left(2 A_3\int_0 ^t  (1+| c(8 u_{-} ) \tau - |x_0||)^{-\mu}  d\tau \right)\|\Phi (0)\|_{L^\infty (\R^2)} \\
\leq & \exp\left(4 A_3\int_0 ^{\infty}  (1+ c(8 u_{-} ) \tau )^{-\mu}  d\tau \right)\|\Phi (0)\|_{L^\infty (\R^2)} \\
=& \tilde{C_a} \|\Phi (0)\|_{L^\infty (\R^2)}.
\end{align*}
In the same way as the proof of Lemma \ref{upes}, there exists a number $\tilde{\varepsilon }>0$ depending on $C_a$  and $u_-$ such that
\begin{eqnarray}
\frac{u_-}{4}\leq u(t,x) \leq 4 u_{-}
\end{eqnarray}
on $[0,T_m + t_0]$, if $\Phi(0) \leq  \tilde{\varepsilon }$,
which implies that \eqref{riz} holds  $[0,T_m + t_0]$. Therefore, w have a contraction, which yields that $T_m =T^*$
\end{proof}

\subsection{Proof of Theorem \ref{main5}}

From the uniform boundedness of $\|\Phi  (t)\|_{L^{\infty} (\R^2)}$, we have the uniform boundedness of $v(t,x)$.
Hence \eqref{riz} in Lemma \ref{upest} implies that $(u(t),v(t)) \in X_{m_1 ,\delta_1}$ for some $m_1$ and $\delta_1$ with all $t \in [0,T^*)$.
We show the statement of the second half in Theorem \ref{main5}. We only show it in the case that $r_x (x_0) \leq -K$ is assumed for  some $x_0 \in \R$ and a  sufficiently large number $K$.
In the case that the spatial decay  \eqref{asu} is assumed, from \eqref{riz}, we have $|x_{\pm} (t)|  \geq c(4 u_{-} ) t - |x_0|$ for sufficiently small $\tilde{\varepsilon} $.
Hence, by  this estimate and \eqref{asu} or \eqref{kiri}, we have  that the second, third and forth terms in \eqref{eq1x} are uniformly bounded and that   $C^{-1} \leq A_{\pm}(t) \leq C$
for some constant $C>0$.
Hence we have 
\begin{eqnarray*}
Y(t)\leq Y(0) + K^* - C\int_0 ^t  Y(\tau)^2 d\tau,
\end{eqnarray*}
where $C$ and $K^*$ depend on $m$ $\delta$ and $A_j$ for $j=1, 2, 3, 4$. We note that $K^*$ and $C$ can be 
chosen independent of $x_0$.
Therefore $-Y(t)$ blows up in finite time, if $r_x (0) \leq -K$ is assumed for sufficiently large $K >0$.
 
\begin{Remark}{\bf (Expectation for the global existence of solution).}
We expect that solutions of \eqref{de0} exist globally in time, if $a(t,x) \in C^1 _b ([0,\infty)\times \R)$ 
satisfies that either $a(t,x) \geq (1+t)^{-\mu}$ or $a(t,x) \geq (1+|x|)^{-\mu}$ with $0 \leq  \mu <1$ and some suitable conditions. However, our method for the proof of Theorem \ref{main5} is not applicable
to this method, since our method for Lemmas  \ref{esPx} and \ref{upest2} heavily depends on  the boundedness of $\int_0 ^t a(\tau ,x_{\pm} (\tau)) d\tau$.
\end{Remark}





\end{document}